\newtheorem{example}{Example}
\newcommand*{\B}[1]{#1}
\newcommand*{\F}{{\mathbb{F}}}
\DeclareMathOperator{\diag}{diag}
\DeclareMathOperator{\nnz}{nnz}
\DeclareMathOperator{\rnk}{rank}
\DeclareMathOperator{\tw}{tw}
\title{Sparse LDL, LU, and Inverse Butterfly Factorization via Tree Decomposition}
\author{Edgar Solomonik\thanks{Siebel School of Computing and Data Science, University of Illinois Urbana-Champaign, Urbana, IL 61801, USA (\email{solomon2@illinois.edu})}
}
\date{April 2025}
\begin{document}

\maketitle

\begin{abstract}
While linear systems over general fields can be solved in matrix-multiplication time, the complexity of symmetric triangular factorization has received relatively little formal study.
We give dense and sparse LDL algorithms for symmetric matrices over an arbitrary field.
Both algorithms leverage pivoted (rank-revealing) LU on off-diagonal blocks of a saddle-point form of a general symmetric matrix.
For an $n\times n$ matrix, this yields an $O(n^\omega)$ dense LDL algorithm, where $n\times n$ matrix multiplication is assumed to cost $O(n^\omega)$ with $\omega>2$.
For sparse matrices whose graph has treewidth $\tau$, we provide an implicit LDL in $O(n\tau^{\omega-1})$ time, and an explicit LDL whenever the rank deficiency is $O(\tau)$.
We give analogous results for sparse LU via a standard off-diagonal embedding.
We also obtain bounds on work, storage, and parallel-depth in terms of the dense $\tau\times\tau$ kernels executed at each bag in a tree decomposition.
Finally, in the full-rank bounded-treewidth setting, we prove that $A^{-1}$ has complementary low-rank structure and admits an exact butterfly factorization with rank $O(\tau)$.
\end{abstract}

\section{Introduction}

The use of fast matrix multiplication in Gaussian elimination was first proposed alongside the first subcubic matrix multiplication algorithm in the seminal work by Strassen~\cite{strassen1969gaussian} in 1969.
Since then, numerous improvements have lowered the complexity of matrix multiplication~\cite{Pan:1984:MMF:2212,coppersmith1987matrix,le2014powers,williams2024new}, whose cost we parameterize as $O(n^\omega)$ for $n\times n$ matrices.
We assume throughout the paper that $\omega > 2$ (if $\omega=2$, the complexity of factorization algorithms considered in the paper would have additional logarithmic factors).
Five years after Strassen's paper, Bunch and Hopcroft~\cite{bunch1974triangular} provided an approach to perform LU factorization with partial pivoting in $O(n^\omega)$ time for a full-rank real or complex $n\times n$ matrix.
Based on a suggestion by Sch\"onhage, they also point out that to solve $Ax=b$, it would suffice to form the normal equations and perform Cholesky on $A^TA$, avoiding pivoting.
However, Sch\"onhage's suggestion is not applicable in the case of solving $Ax=b$ over a general field, in particular over GF(2).
A simpler recursive algorithm for LU factorization of $m\times n$ matrices in $O(mn^{\omega-1})$ cost was proposed by 
Ibarra, Moran, and Hui~\cite{IBARRA198245} in 1981.
In the case of symmetric positive definite sparse matrices whose off-diagonal nonzeros are described by a graph $G$, the treewidth, $\tw(G)$, is equal to the minimum, over elimination orderings, of the maximum number of off-diagonal nonzeros in any column of the Cholesky factor $L$.
It is straightforward to accelerate Cholesky using fast matrix multiplication, yielding an algorithm with runtime $O(n\tw(G)^{\omega-1})$.
However, in the case of symmetric indefinite matrices, we are not aware of an analogous LDL factorization algorithm with running time matching the corresponding dense and sparse Cholesky bounds.

A system of equations with a symmetric matrix may be solved via LU factorization with partial pivoting, however, the LDL factorization may be desirable, as it can be described based on elementary vertex and edge elimination operations on an undirected graph.
Further, the LDL factorization provides additional information about the matrix, such as the inertia (number of zero, positive, and negative eigenvalues), in the case of real or complex matrices.
The primary motivating application for this work is the application of LDL factorization to quantum states resulting from Clifford circuits~\cite{van2004graphical,anders2006fast,gosset2024fast,de2022fast,aaronson2004improved,dehaene2003clifford}, such as graph states~\cite{hein2006entanglement}.
These states are described by a symmetric matrix and adhere to local-equivalences~\cite{bouchet1993recognizing,van2004graphical,dehaene2003clifford}, that are fully described by LDL-like edge and vertex elimination operations over GF(2).
Further, Gosset, Grier, Kerzner, and Schaeffer provide an $O(n^\omega)$-time solver for sparse linear systems over GF(2) by reduction to Clifford circuit simulation~\cite{gosset2024fast}.
In the Clifford-circuit setting that motivates this work, the matrices are typically sparse, so the regime of greatest interest is large $n$ together with enough combinatorial structure to control fill.
Beyond preserving symmetry and providing additional information regarding the matrix, such as in the case of graph states, the LDL factorization may also be computed with less fill and work than LU (see Example~\ref{ex:1}).
In this paper, we provide algorithms for LDL factorization of sparse and dense symmetric matrices that achieve the same complexity as the best known methods for Cholesky.
Given our primary motivation is GF(2), we consider a general field and do not give focus to numerical stability.

The main challenge associated with developing a fast-matrix-multiplication-based algorithm for sparse LDL is that vertices (rows and columns of the matrix) may not always be eliminated.
Given a tree decomposition of the graph $G$~\cite{robertson1986graph}, we would like to eliminate all vertices contained exclusively in the leaves of the tree, but cannot do so naively.
We propose an approach that first eliminates as many vertices as possible within the leaves of a tree decomposition, resulting in a saddle-point system of equations~\cite{benzi2005numerical}.
Then, our algorithm leverages a variant of the null-space method~\cite{antoniou2007practical} (see Benzi, Golub, and Liesen~\cite{benzi2005numerical} for further references) to efficiently compute the LDL decomposition of such saddle-point systems and maintain appropriately bounded fill.
Our approach is closely related to an algorithm for the null-space method proposed by Schilders~\cite{schilders2009solution}, but avoids orthogonal factorization (again not possible over GF(2)) and explicitly constructs an LDL decomposition.

Beyond the use of the null-space method to compute a sparse LDL, we introduce an implicit LDL decomposition based on a primitive we refer to as vertex (row/column) peeling.
With this primitive, we can define the LDL based on elementary transformations with bounded fill.
Peeling is applied only when the factorized matrix is rank-deficient, and in general we obtain the explicit form of the LDL for a maximal-size, full-rank, square block of a permutation of the matrix.
For a general sparse rectangular matrix $B$, the same symmetric machinery also yields LU through the off-diagonal embedding,
\[
A_B=\begin{bmatrix} 0 & B^H\\ B & 0\end{bmatrix}.
\]

Finally, we show that tree decomposition of an invertible matrix $A$, also implies that, with appropriate row/column ordering, $A^{-1}$ has low-rank off-diagonal blocks.
We leverage the LDL decomposition to show this result.
Further, we demonstrate that $A^{-1}$ admits a butterfly factorization with rank $\tau$ ($A^{-1}$ may be written as a product of $O(\log n)$ block-sparse matrices, each with $O(n/\tau)$ blocks of dimension at most $O(\tau)$).

In Section~\ref{sec:bg}, we present LDL factorization in terms of elementary graph operations, and review prior work on solving saddle point systems of equations.
In the subsequent sections, we present our contributions,
\begin{itemize}
\item in Section~\ref{sec:nsldl}, we present a new algorithm for LDL factorization of saddle point systems, adapting Schilders' null-space method so as to reduce the problem to rectangular LU and dense LDL factorization of blocks,
\item in Section~\ref{sec:app_ldl_strassen}, we give a dense LDL algorithm with $O(n^\omega)$ arithmetic complexity for $n\times n$ matrices and the same saddle-point/LU reduction in low-rank recursive cases,
\item our main contribution, presented in Section~\ref{sec:sparse_ldl}, combines these ingredients in an implicit sparse LDL factorization with cost $O(n\tau^{\omega-1})$, where $\tau$ is the treewidth of the graph associated with the matrix; the same bound also gives sparse LU through the off-diagonal embedding above,
\item in Section~\ref{sec:inverse_butterfly}, we prove complementary rank bounds for $A^{-1}$ and derive an exact butterfly factorization on the balanced binary tree hierarchy obtained from a tree decomposition.
\end{itemize}
While these matrix factorization algorithms may be combined with some forms of pivoting, quantifying stability remains an open question for all of them.

\section{Background}
\label{sec:bg}

\subsection{Notation and Algebraic Setting}

We use $a_{ij}$ and $A[i,j]$ to refer to the element in row $i$ and column $j$ of matrix $A$.
Further, $a_{:,i}$ and $A[:,i]$ refer to the $i$th column of $A$, while $A[i:j,:]$ refer to rows $i$ through $j$.
For a vector $u$ of dimension $n$, the notation $u[:l]$ and $u[m:]$ is short-hand notation for $u[1:l]$ and $u[m:n]$.
We write $X\sqcup Y$ for the disjoint union of sets $X$ and $Y$.

We present LDL decomposition for matrices over a general field $\mathbb F$ with identity elements $0,1$.
Throughout, either $a^*=a$ (the main arbitrary-field setting), or $a\mapsto a^*$ is a fixed involution on $\mathbb F$ (for example, the standard conjugation on a quadratic extension).
We assume the given matrix $A$ satisfies the symmetry relation $a_{ij}=a_{ji}^*$.
The only property of $*$ used by the elimination identities below is that this symmetry is preserved by the Schur complements arising from nonsingular $1\times 1$ and $2\times 2$ pivots.
It is straightforward to extend the algorithms and analysis in this paper to the skew-symmetric case, i.e., $a_{ij}=-a_{ji}$ or $a_{ij}=-a_{ji}^*$.
In such cases, the $D$ matrix in LDL satisfies $D^H=-D$ and consists only of anti-diagonal $2\times 2$ blocks.
Hence, we may apply LDL factorization to, e.g., a complex Hermitian or skew-Hermitian matrix, a real symmetric or skew-symmetric matrix, or a symmetric matrix over GF(2).
From here on, we assume a fixed field $\mathbb F$ and involution, and refer to any matrix satisfying the symmetry relation as $H$-symmetric, with the notation $B=A^H$ implying $b_{ij}=a_{ji}^*$.

\subsection{LDL Decomposition}
\label{subsec:ldl_def}

The LDL decomposition of an $H$-symmetric matrix $A\in\mathbb{F}^{n\times n}$ is
\[P^TAP=LDL^H,\]
where $P$ is a permutation matrix, $L$ is lower triangular and unit-diagonal, and $D$ is block diagonal with blocks that are either $1\times 1$ or $2\times 2$ and anti-diagonal.
In the symmetric/Hermitian case, $D=D^H$.
For the skew-symmetric or skew-Hermitian variant mentioned above, the same elimination pattern gives $D^H=-D$ and only anti-diagonal $2\times 2$ blocks.
When $A$ is not of full rank, the last $n-\rank(A)$ columns of $L$ are redundant, and we may obtain a reduced LDL decomposition with $L\in\mathbb{F}^{n\times\rank(A)}$ and lower-trapezoidal.

The LDL decomposition may be expressed in terms of graph operations, namely vertex and edge eliminations.
First, we associate the off-diagonal entries of $A$ with edges of a graph $G=(V,E)$, $V=\{1,\ldots, n\}$.
A vertex elimination of a vertex $i$ means first permuting $i$ to the leading position and then performing the update below; equivalently, the formulas below describe the generic operation after such a permutation.
A vertex elimination on vertex $1$ is possible if $a_{11}\neq 0$ and corresponds to computing,
\[\begin{bmatrix} 0 & 0 \\ 0 & S \end{bmatrix} = A- a_{:,1}a_{11}^{-1}a_{:,1}^H,\]
where the $(n-1) \times (n-1)$ matrix $S$ is the Schur complement of $a_{11}$ in $A$.
Each successive vertex elimination defines a successive column of $L$ ($a_{:,1}a_{11}^{-1}$) and a diagonal entry of $D$ ($a_{11}$).
A Cholesky factorization may be formed from $n$ vertex eliminations for any complex Hermitian positive definite matrix.
In LDL, we must also handle scenarios where all available diagonal entries are zero.
If $a_{11}=0$, an edge elimination on the first two vertices is possible if $a_{21}\neq 0$.
Again, this should be understood after permuting the chosen edge to the leading $2\times 2$ block.
Edge elimination corresponds to forming the $(n-2)$-dimensional square Schur complement matrix $S$, where
\[\begin{bmatrix} 0 & 0 \\ 0 & S \end{bmatrix} = A- 
\begin{bmatrix} a_{:,1} & a_{:,2} \end{bmatrix}
\begin{bmatrix} a_{11} & a_{12} \\ a_{21} & a_{22} \end{bmatrix}^{-1}
\begin{bmatrix} a_{:,1} & a_{:,2} \end{bmatrix}^H.\]
Note that if $a_{11}=0$, the inverse $B=\begin{bmatrix} a_{11} & a_{12} \\ a_{21} & a_{22} \end{bmatrix}^{-1}$ has $b_{22}=0$, and similarly if $a_{22}=0$, since 
\[
\begin{bmatrix}0 & a_{12} \\ a_{21} & a_{22}\end{bmatrix}
\begin{bmatrix}-a_{22}a_{12}^{-1}a_{21}^{-1} & a_{21}^{-1} \\ a_{12}^{-1} & 0\end{bmatrix}= 
\begin{bmatrix}1 & 0 \\ 0 & 1\end{bmatrix}.
\]
A genuine $2\times 2$ edge elimination is needed only when both $a_{11}$ and $a_{22}$ are $0$.
If $a_{22}\neq 0$, one may instead first eliminate vertex $2$ (equivalently, after swapping the first two vertices) and then eliminate vertex $1$ in the resulting Schur complement.
When both $a_{11}$ and $a_{22}$ are $0$, edge elimination defines two successive columns of $L$, namely $\begin{bmatrix} a_{:,2}a_{12}^{-1} & a_{:,1}a_{21}^{-1}\end{bmatrix}$, and an anti-diagonal block in $D$, namely $\begin{bmatrix} 0 & a_{12} \\ a_{21} & 0 \end{bmatrix}$.

A sequence of vertex and edge eliminations on consecutive vertices results in an LDL factorization after $\rank(A)$ vertices are eliminated as part of either type of elimination.
With row and column permutations $P$ chosen to ensure the eliminations act on consecutive rows and columns, we obtain
\[P^TAP = LDL^H,\]
where $D$ has $1\times 1$ and $2\times 2$ diagonal blocks.
Each vertex elimination contributes one column of $L$ and one $1\times 1$ block of $D$, while each genuine edge elimination contributes two columns of $L$ and one anti-diagonal $2\times 2$ block.

When $\mathbb{F}=\mathbb{R}$ or $\mathbb{F}=\mathbb{C}$, to minimize round-off error in finite precision, row and column permutations in LDL are typically selected based on adaptations of partial pivoting.
In particular, the Bunch-Kaufman pivoting algorithm~\cite{bunch1977some} and its variants~\cite{dongarra1990solving,higham1997stability,duff1979direct,duff1991factorization}, seek the largest magnitude off-diagonal element in the leading column ($a_{i1}$), and choose between performing an edge elimination corresponding to that off-diagonal entry, or a vertex elimination of one of the vertices adjacent to this edge.
The choice is made based on the magnitudes of $a_{i1}$, $a_{11}$, and $a_{ii}$.
This algorithm leads to a bounded growth factor (growth in magnitude of $L$ entries with the number of vertex/edge eliminations), and hence favorable numerical stability bounds~\cite{higham1997stability}.
%of off-diagonal entry selected and the two diagonal

\subsection{Saddle-Point Systems of Linear Equations}
In this subsection, $A$ denotes the leading block of a saddle-point system and $B$ the constraint block.
Linear systems of equations with saddle-point structure are ubiquitous in numerical optimization methods as they reflect the structure of the Karush-Kuhn-Tucker (KKT) systems that describe minima of convex quadratic optimization problems~\cite{nocedal1999numerical,dyn1983numerical,gould2001solution}.
These systems are described by a matrix with the following block structure,
\[M=\begin{bmatrix} A & B^H \\ B & 0\end{bmatrix},\]
where $A$ is $H$-symmetric.
In the context of optimization, $A$ often corresponds to a Hessian, while $B$ encodes constraints.
In particular, when $A$ is real and symmetric positive definite (SPD), 
\[M\begin{bmatrix} x \\ \lambda \end{bmatrix} = \begin{bmatrix} y \\ z \end{bmatrix}\]
describes the minimum $x$ in the optimization problem,
\[\min_{x,Bx=z} \frac 12x^TAx - x^Ty.\]
Numerical solutions to systems with saddle point structure have been studied for decades~\cite{benzi2005numerical}.

One commonly studied approach~\cite{golub2003solving,benzi2005numerical,karim2022efficient} to direct factorization of $M$ is to leverage positive definiteness of $A$ (when the Hessian is semi-definite, Tikhonov regularization is an intuitive and popular option to restore SPD structure~\cite{dollar2007using,benzi2005numerical}).
Then, the Schur complement of $A$, namely $-BA^{-1}B^H$, is negative semi-definite.
Further, in the context of optimization, $B$ is typically full row rank, so Cholesky may be applied to $BA^{-1}B^H$ to obtain an LDL factorization of $M$, in which the $D$ factor is diagonal.
Use of (pivoted) Cholesky instead of a general pivoted LDL factorization is advantageous in the sparse case, in which pivoting has the additional objective of reducing fill.

However, eliminating $A$ first may be unfavorable in terms of fill.
We illustrate this potential overhead with the following example.
%As an example, consider a saddle point system with the following nonzero structure.
\begin{example}\label{ex:1}
Consider a saddle point system $M$ with blocks,
\[A = \begin{bmatrix} a_{11} & a_{12} & & 0 \\
a_{21} & 0 & \ddots  &  \\
 & \ddots &  \ddots & a_{n-1,n} \\
 0& & a_{n,n-1} & 0\\
\end{bmatrix}, \quad 
B = \begin{bmatrix} b_{11}  & & 0 \\
\vdots & \ddots & \\
b_{n1} & 0& b_{nn}
\end{bmatrix}.
\]
Elimination of the first column in $M$ with these blocks has cost $O(n^2)$ and results in dense Schur complement block that is later zeroed out.
Eliminating the first $n$ columns in $M$ in order hence leads to $O(n^3)$ cost and $O(n^2)$ nonzeros in $L$.
On the other hand, performing edge eliminations of pairs of consecutive columns of $A$ in reverse order, results in $O(n^2)$ cost and $O(n)$ nonzeros in the $L$ factor.
Further, if we instead perform edge eliminations of $b_{ii}$ in either order in $i$, there is no fill in $L$ and the overall cost of LDL factorization of $M$ is $O(n)$.
\end{example}
The last approach in Example~\ref{ex:1} corresponds to the null-space method in constrained optimization, which we discuss next.

\subsubsection{Null-space Method}
\label{sec:bg_np}

Given a saddle point system of the form,
\[M = \begin{bmatrix} A & B^H \\ B & 0 \end{bmatrix},\]
where $A$ is $n\times n$ and $B$ is $m\times n$ with full row rank $m\le n$, the null-space method solves $Mx=b$ by projecting $A$ onto the null space of $B$~\cite{heath2018scientific,antoniou2007practical,benzi2005numerical}.
In the classical optimization setting, one additionally assumes that $A$ is SPD on $\ker(B)$.
A classical way to compute the null-space is via the QR factorization of $B$.
In 2009, Schilders proposed a method based on the LQ factorization of $B$~\cite{schilders2009solution}, leveraging this factorization to construct a mixed orthogonal/triangular factorization of $M$.

In more recent works, the null-space method has been interpreted in terms of generalized block triangular matrix factorizations~\cite{rees2014null} and more specifically in terms of a Schur complement~\cite{pestana2016null}.
Assuming, after a column permutation, that $B=\begin{bmatrix} B_1 & B_2\end{bmatrix}$ where $B_1$ is $m\times m$ and nonsingular, we may permute $M$ as
\[P^TMP=P^T
\begin{bmatrix} A_{11} &A_{12}& B_{1}^H   \\A_{21} &A_{22} & B_{2}^H \\ B_{1} & B_{2} & 0\end{bmatrix}
P 
=
\begin{bmatrix} A_{11} & B_{1}^H &A_{12}  \\ B_{1}  & 0& B_{2}\\A_{21}  & B_{2}^H&A_{22}\end{bmatrix}.
\]
Then the Schur complement of the leading $2m\times 2m$ block in $P^TMP$ is the null-space matrix~\cite{pestana2016null}.

Further, as pointed out in~\cite{rees2014null,schilders2009solution,gould1999modified}, this $2m\times 2m$ block may be factorized in a structure-preserving way by reordering into a 2-by-2 tiling with the following nonzero structure,
\[W = P_2^T 
\begin{bmatrix} A_{11} & B_{1}^H  \\ B_{1}  & 0
\end{bmatrix}P_2 = 
\begin{bmatrix}
w_{11} & w_{12} & w_{13} & w_{14} &\cdots \\
w_{21} & 0 & w_{23} & 0 &\cdots \\
w_{31} & w_{32} & w_{33} & w_{34} &\cdots \\
w_{41} & 0 & w_{43} & 0 &\cdots \\
\vdots & \vdots & \vdots & \vdots & \ddots
\end{bmatrix}.
\]
Edge eliminations on pairs of vertices in this ordering preserve the sparsity of the 2-by-2 blocks.
We refer to the above tiling scheme as $\Gamma$-tiling.
This $2\times 2$ tiling underlies the saddle-point elimination procedures developed in Section~\ref{sec:nsldl}. Algorithm~\ref{alg:ldl-gel} implements the $\Gamma$-tiling algorithm with handling of zero $2\times 2$ diagonal blocks, whereas Algorithm~\ref{alg:ldl-nsp} realizes the same step via an LU factorization of $B^H$.
%, which is
%\[
%A_{22} - 
%\begin{bmatrix}
%A_{21}  & B_{2}^H
%\end{bmatrix}
%\begin{bmatrix} 0 & B_{1}^{-1} \\ B_{1}^{-H} & B_{1}^{-H}A_{11}B_{1}^{-1}\end{bmatrix}
%\begin{bmatrix}
%A_{21}^H  \\ B_{2}
%\end{bmatrix}
%= A_{22} - A_{21}B_{1}^{-1}B_2 - B_2^HB_{1}^{-H}A_{21}^H - B_2^HB^{-H}_{11}A_{11}B_1^{-1}B_2
%\]
%
%
%

%Then the Schur complement of the leading $2R\times 2R$ block is full rank.
%The null-space method corresponds to taking the Schur complement of this block within the above system~\cite{pestana2016null}, which is
%\[
%P^HMP-
%\begin{bmatrix} A_{11} & B_{11}^H  \\ B_{11}  & 0\\A_{21}  & B_{12}^H\\ B_{21} & 0 \end{bmatrix}
%\begin{bmatrix} 0 & B_{11}^{-1} \\ B_{11}^{-H} & B_{11}^{-H}A_{11}B_{11}^{-1}\end{bmatrix}
%\begin{bmatrix} A_{11} & B_{11}^H  \\ B_{11}  & 0\\A_{21}  & B_{12}^H\\ B_{21} & 0 \end{bmatrix}^H
%=\begin{bmatrix} 0&0&0&0\\0&0&0&0\\0&0& X &0 \\ 0&0&0&0\end{bmatrix}.
%\]

\subsection{$\Gamma$-Elimination}
\label{subsec:game}
%Given a saddle point system of the form,
%\[M = \begin{bmatrix} A & B^H \\ B & 0 \end{bmatrix},\]
%where $A\in\mathbb{F}^{n\times n}$ is symmetric and $B\in\mathbb{F}^{m\times n}$ is full rank,
Following the $\Gamma$-tiling scheme introduced at the end of the prior section, we describe an edge-elimination process for computing an LDL factorization while preserving saddle-point structure.
We refer to this method, which was studied by Gould~\cite{gould1999modified}, as $\Gamma$-elimination.
After any stated row/column permutation, we continue to denote the permuted blocks by $A$, $B$, and $M$.
Graphically, each edge elimination below is just the Schur-complement update associated with a $2\times 2$ pivot having one endpoint in the $A$-block and one in the $B$-block.
Given the graph $G$ corresponding to the off-diagonal part of $M$, let the vertices corresponding to the first $n$ rows/columns of $M$ be $V_1$ and the remaining vertices be $V_2$.
There exists a sequence of $\rank(B)$ edge eliminations on edges $(u,v)$ where $u\in V_1$ and $v\in V_2$.
The sequence may be obtained by repeatedly picking an edge corresponding to a nonzero entry in $B$, performing the corresponding edge elimination, and recursing.
The key invariant is that the Schur complement maintains saddle-point structure (the lower right block stays zero).

To obtain this sequence, pick a nonzero entry $b_{11}$ in $B$ after permuting rows and columns so that it lies in the first row and first column of $B$.
Then, after the corresponding edge elimination, the updated system $M'$ is
\begin{align*}
M'
&=
M- 
\begin{bmatrix} 
a_{:,1} & b_{1,:}^H \\ 
b_{:,1} & 0 
\end{bmatrix}
\begin{bmatrix} a_{11} & b_{11}^* \\ b_{11} & 0 \end{bmatrix}^{-1}
\begin{bmatrix} 
a_{:,1} & b_{1,:}^H \\ 
b_{:,1} & 0 
\end{bmatrix}^H\\
&=
\begin{bmatrix} A' & {B'}^H \\ B' & 0\end{bmatrix},
\end{align*}
where
\begin{align*}
A'&=A- a_{:,1}b_{11}^{-1}b_{1,:}-b_{1,:}^H(b_{11}^*)^{-1}a_{:,1}^H+b_{1,:}^H a_{11}(b_{11}^*b_{11})^{-1}b_{1,:}, \\
B'&=B - b_{:,1}b_{11}^{-1}b_{1,:}.
\end{align*}
Hence, this sequence of eliminations simultaneously performs Gaussian elimination (LU factorization) on $B$, which will result in the off-diagonal block in the saddle point system being zero after $\rank(B)$ steps.
It is important to note that these edge eliminations connect pairs of vertices in $V_1$ only if one of them is connected to a vertex in $V_2$ (equivalently, a new nonzero in $A'$ may only be introduced through a shared neighbor in $V_2$), while not connecting any nodes in $V_1$ to $V_2$ that were previously disconnected (since columns of $B'$ are linear combinations of those in $B$).
The first row and column of $A'$ and $B'$ are zero and may be dropped from the system, while the remaining rows and columns are a saddle-point system.

After $\rank(B)$ edge eliminations, we obtain a partial LDL factorization (or a complete one if $\rank(M)=2\rank(B)$).
We could minimize the number of degrees of freedom in representing this LDL, by storing a rescaled version of the rank-2 skeleton decomposition~\cite{kishore2017literature},
\begin{align*}
\begin{bmatrix} 
a_{:,1} & b_{1,:}^H \\ 
b_{:,1} & 0 
\end{bmatrix}&
\begin{bmatrix}0  & b_{11}^{-1} \\ (b_{11}^*)^{-1} & -a_{11}(b_{11}^*b_{11})^{-1} \end{bmatrix}
\begin{bmatrix} 
a_{:,1} & b_{1,:}^H \\ 
b_{:,1} & 0 
\end{bmatrix}^H \\&=
\underbrace{\begin{bmatrix} 
a_{11} & 1  \\ 
a_{2:,1}& b_{1,2:}^H/b_{11}^* \\
b_{:,1} & 0 
\end{bmatrix}}_{K}
\underbrace{
\begin{bmatrix}0  & 1 \\ 1 & -a_{11} \end{bmatrix}}_{D_{2\times 2}}
\begin{bmatrix} 
a_{11} & 1  \\ 
a_{2:,1}& b_{1,2:}^H/b_{11}^* \\
b_{:,1} & 0 
\end{bmatrix}^H.
\end{align*}
To recover the standard LDL format from the above rank-2 decomposition, we may shift the $(n+1)$th row to be second via permutation matrix $P'$ and define two columns of $L$ (a lower trapezoidal and unit diagonal $n\times 2$ matrix) and the corresponding 2-by-2 diagonal or antidiagonal part of the $D$ matrix as follows,
\begin{enumerate}
\item if $a_{11}=0$, they are $P'K\begin{bmatrix} 0 & 1 \\ 1 & 0\end{bmatrix}$ and $\begin{bmatrix} 0 & 1 \\ 1 & 0\end{bmatrix}$,
\item if $a_{11}\neq 0$,
they are $P'KK_1^{-1}L_D$ and $\tilde{D}$, where $K_1 = \begin{bmatrix} a_{11} & 1 \\ b_{11} & 0 \end{bmatrix}$, and we use the $2\times 2$ LDL factorization, $K_1D_{2\times 2}K_1^H=L_D\tilde{D}L_D^H$.
%they are $PKL_D^{-H}\tilde{D}^{-1}$ and $\tilde{D}$, where
%$L_D=\begin{bmatrix} 1 & 0 \\ 1/a_{11} & 1 \end{bmatrix}$ and $\tilde{D}= \begin{bmatrix} 1/a_{11} & 0 \\ 0 &
%-a_{11}^2\end{bmatrix}$.
\end{enumerate}
%standard form may be recovered by independent factorizations of triangular matrices $\begin{bmatrix} 0 & D_B[i,i] \\ D_B[i,i]^* & D_{\tilde{A}}[i,i] \end{bmatrix}^{-1}=L_iL_i^H$ and absorbing $L_i^{-H}$ into the full $L$ factor (to obtain columns $2i-1$ and $2i$).

%\section{Preserving Saddle Point Structure in LDL Factorization}
\section{Adapting the Null-Space Method to LDL Factorization}
\label{sec:nsldl}

\begin{algorithm}[h]
\caption{$[\B P,\B Q,\B Y,\B L,\B U,\B D,r]=$ Partial-LDL-$\Gamma$-Elim($\B A$, $\B B$)}\label{alg:ldl-gel}
\begin{algorithmic}[1]
\Require $M=\begin{bmatrix} A & B^H \\ B & 0 \end{bmatrix}$, 
$\B A\in\mathbb{F}^{n\times n}$, 
$\B A^H = \B A$,
$\B B\in\mathbb{F}^{m\times n}$. 
\Ensure The rank of $B$ is $r$ and $P,Q,Y,L,U,D$ give a partial LDL factorization of $M$ satisfying Definition~\ref{def:partLDL}.
\State Pick any nonzero $b_{ij}$ in $B$ (if none, return null matrices and $r=0$).
\State Let $P_1=I_n-(e_i-e_1)(e_i-e_1)^T$
\State Let $Q_1=I_m-(e_j-e_1)(e_j-e_1)^T$
\State Let $\begin{bmatrix} \bar{A} & \bar{B}^H \\ \bar{B} & 0\end{bmatrix}=\begin{bmatrix} P_1 & 0 \\ 0 & Q_1\end{bmatrix}^TM\begin{bmatrix} P_1 & 0 \\ 0 & Q_1\end{bmatrix}$
\State $d_{11} = -\bar{a}_{11}$
\State $y_1 = \bar{a}_{:,1} - e_1\bar{a}_{11}$
\State $l_1 = \bar{b}_{1,:}^H/\bar{b}_{11}^*$
\State $u_1 = \bar{b}_{:,1}^H$
\State $A_{\text{new}} = \bar{A}- \bar{a}_{:,1}\bar{b}_{11}^{-1}\bar{b}_{1,:}-\bar{b}_{1,:}^H(\bar{b}_{11}^*)^{-1}\bar{a}_{:,1}^H+\bar{b}_{1,:}^H\bar{a}_{11}(\bar{b}_{11}^*\bar{b}_{11})^{-1}\bar{b}_{1,:}$
\State $B_{\text{new}} = \bar{B} - \bar{b}_{:,1}\bar{b}_{11}^{-1}\bar{b}_{1,:}$
\State $[\B P_2,\B Q_2,\B Y_2,\B L_2,\B U_2,\B D_2,r_2]=$ Partial-LDL-$\Gamma$-Elim($A_{\text{new}}[2:,2:]$, $B_{\text{new}}[2:,2:]$)
\State $\tilde{Y}_2 = \begin{bmatrix} 0 \\ Y_2\end{bmatrix}$, $\tilde{L}_2 = \begin{bmatrix} 0 \\ L_2\end{bmatrix}$, $\tilde{U}_2 = \begin{bmatrix} 0 & U_2\end{bmatrix}$
\State $Y = \begin{bmatrix} y_1 & \tilde{Y}_2 \end{bmatrix}$, $L = \begin{bmatrix} l_1 & \tilde{L}_2 \end{bmatrix}$, $U = \begin{bmatrix} u_1 \\ \tilde{U}_2 \end{bmatrix}$
\State $D = \begin{bmatrix} d_{11} & 0 \\ 0 & D_{2}\end{bmatrix}$
\State $r=r_2+1$
\State $\tilde{P}_2 = \diag(1,P_2)$ and $\tilde{Q}_2 = \diag(1,Q_2)$
\State $P=P_1\tilde{P}_2$
\State $Q=Q_1\tilde{Q}_2$
\end{algorithmic}
\end{algorithm}
Algorithm~\ref{alg:ldl-gel} makes the elimination step explicit and motivates Definition~\ref{def:partLDL}. Algorithm~\ref{alg:ldl-nsp} below derives the same partial factorization in a form better suited to the later dense and sparse algorithms and to the corresponding cost analysis.

We now adapt the idea of Schilders' null-space method~\cite{schilders2009solution} to devise efficient algorithms for LDL factorization over a general field.
Over GF(2), matrices (vector spaces) do not admit orthogonal factorizations (bases), and a suitable $\Gamma$-tiling ($\Gamma$-elimination order) is hard to identify statically (zeros may be frequently introduced by updates).
%We first give a basic adaptation of $\Gamma$-tiling that finds an LDL factorization of a saddle-point system via edge eliminations.
We present a modified version of Schilders' algorithm, which leverages LU with row and column permutations to identify a null-space.
The algorithm allows for a simple reduction of the null-space LDL factorization for saddle-point systems to rectangular LU factorization and LDL of a dense matrix.

\subsection{Constraint-Complemented Partial LDL}

In the $\Gamma$-elimination process, 
we may keep the rows and columns in their original blocks to express the resulting partial LDL factorization obtained as the following partial block factorization.
In this decomposition, we leverage the structure of the rescaled rank-2 skeleton decompositions obtained from each edge elimination.
The resulting decomposition is a block representation of the Schur complement factorization described in~\cite{rees2014null} (Section 2.2).
\begin{definition}[Constraint-Complemented Partial LDL]
\label{def:partLDL}
Consider a saddle point system,
\[M = \begin{bmatrix} A & B^H \\ B & 0 \end{bmatrix},\]
where $A\in\mathbb{F}^{n\times n}$ is $H$-symmetric and $B\in\mathbb{F}^{m\times n}$ is rank $r$.
%$D_{\tilde{A}}$,
The partial LDL decomposition of $M$ is
defined by matrices 
$D\in\mathbb{F}^{r\times r}, L, Y\in\mathbb{F}^{n\times r}, U\in\mathbb{F}^{r\times m},P\in\{0,1\}^{n \times n}, Q\in\{0,1\}^{m\times m}$,
where $D$ is diagonal and $D^H=D$, $L$ is lower trapezoidal and unit-diagonal, $Y=\begin{bmatrix}Y_1 \\ Y_2 \end{bmatrix}$ is lower trapezoidal with zero diagonal and $Y_1$ is square, $U$ is upper trapezoidal with nonzero diagonal, while $P$ and $Q$ are permutation matrices.
These matrices give a valid constraint-complemented partial LDL of $M$ if, for $V=\begin{bmatrix}Y_1 - D \\ Y_2\end{bmatrix}$, we have
%symmetric diagonal 
%and $L_B\in\mathbb{F}^{n\times m}$
%with $L_B$ additionally unit diagonal, upper triangular matrix $U_B\in\mathbb{F}^{n\times m}$, and permutation matrices $P_A$ and $P_B$, so
%There e some diagonal matrix $D$, 
%and upper triangular matrix $U_B$, we have
\begin{align}
\begin{bmatrix} P & 0\\ 0& Q \end{bmatrix}^TM\begin{bmatrix} P & 0\\ 0& Q \end{bmatrix} 
&=
\begin{bmatrix} V & L \\ U^H & 0\end{bmatrix}
\begin{bmatrix} 0 & \B I \\ \B I &\B D\end{bmatrix}
\begin{bmatrix} V & L \\ U^H & 0\end{bmatrix}^H
\\
&+ \begin{bmatrix} P^TAP -VL^H - LV^H-LDL^H&0\\0&0\end{bmatrix},
\label{eq:part_blk_ldl}
\end{align}
where $P^TAP -VL^H - LV^H-LDL^H$ is nonzero only in the lower right $(n-r)\times(n-r)$ block.
%and $\forall i\in\{1,\ldots, m\}$,
%$\begin{bmatrix} y_{ii} & 1 \\ 1 & 0 \end{bmatrix}^{-1} = \begin{bmatrix} 0 & 1 \\ 1 & d_{ii} \end{bmatrix}$.
\end{definition}
%In the above, edge eliminations that are performed as two vertex eliminations (one diagonal element in the $2\times 2$ leading block in the permuted matrix is nonzero) are left in unfactorized form.
%In the case of a partial factorization, w
Algorithm~\ref{alg:ldl-gel} constructs the above partial decomposition via $\Gamma$-elimination.
If $A$ is skew-symmetric rather than $H$-symmetric (and $A$ has zero diagonal), we have $D=0$, while if $A$ is positive definite, $\diag(D)$ is negative.
To obtain a full LDL decomposition of $M$ from a partial one, it suffices to compute and append the LDL factorization of the Schur complement, the $(n-r)\times (n-r)$ nonzero sub-block of $P^TAP - VL^H - LV^H-LDL^H$.
%%A complete LDL factorization of $M$ may be obtained from the above by additionally
%%\begin{itemize}
%%\item Interleaving the blocks to obtain the sequence of updates from vertex and edge eliminations,
%%\[\]terms from the above block factorization, i.e.,
%%\item performing the LDL factorization of 
%%\end{itemize}

The block factorization above simultaneously performs Gaussian elimination on $B$; in particular, it yields an LU factorization of $P^TB^HQ = LU$.
The next subsection answers the converse algorithmic question.
Namely, we give an algorithm to obtain a partial LDL factorization of a saddle point system from the LU factorization of $B^H$.
%corresponding to a valid $\Gamma$-tiling factorization order, by first doing a pivoted LU factorization of $B^H$.

\subsection{LDL Factorization via an LU-based Schilders Null-space Method}
\begin{algorithm}[h]
\caption{$[\B P, Q, \B Y,L,U,\B D,r]=$ Null-space-partial-LDL-Schilders($\B A$, $B$)}\label{alg:ldl-nsp}
\begin{algorithmic}[1]
\Require $M=\begin{bmatrix} A & B^H \\ B & 0 \end{bmatrix}$, 
$\B A\in\mathbb{F}^{n\times n}$, 
$\B A^H = \B A$,
$\B B\in\mathbb{F}^{m\times n}$. 
\Ensure $r$ is the rank of $B$ and $P,Q,Y,L,U,D$ give a partial LDL factorization of $M$ satisfying Definition~\ref{def:partLDL}.
%\begin{align*}
%\begin{bmatrix} P & 0\\ 0& Q \end{bmatrix}^TM\begin{bmatrix} P & 0\\ 0& Q \end{bmatrix} 
%&=
%\begin{bmatrix} Y & U^H \\ L & 0\end{bmatrix}
%\begin{bmatrix} 0 & \B D_B \\ \B D_B^H &\B D\end{bmatrix}
%\begin{bmatrix} Y & U^H \\ L & 0\end{bmatrix}^H
%\\
%&+ \begin{bmatrix} A -Y D_B^HL^H - LD_BY^H-LDL_{B}^H&0\\0&0\end{bmatrix}.
%\label{eq:part_blk_ldl}
%\end{align*}
%where $P$ and $Q$ are permutation matrices, $Y\in\mathbb{F}^{n\times R_B}$ and $L\in\mathbb{F}^{m\times R_B}$ are lower trapezoidal and nonzero on the diagonal, $U\in\mathbb{R}^{n\times R_B}$ is upper triangular and has nonzero diagonal, while $D_B$ and $D$ are diagonal and symmetric. Additionally, $D_B$ is invertible and $\forall i\in\{1,\ldots, R_B\}$, \[\begin{bmatrix} Y[i,i] & L[i,i]^* \\ L[i,i] & 0 \end{bmatrix}^{-1} = \begin{bmatrix} 0 & D_B[i,i] \\ D_B[i,i]^* & D[i,i]\end{bmatrix}.\]
%%$\B P^T\B M \B P = \B L \B D \B L^T$, $\B L$ is unit-diagonal and lower-triangular, $\B D$ is block diagonal with blocks of unit size or $2\times 2$ and antidiagonal, and $\B D^H = D$
%of the form $\begin{bmatrix} 0 & 1 \\ 1 & 0\end{bmatrix}$, the last $n-R$ rows and columns of $\B D$ are zero, $R$ is the rank of $\B A$
\State Compute an LU factorization of the form $P^TB^HQ = LU$, where $L\in\mathbb{F}^{n\times r}$ is lower trapezoidal and unit diagonal, $U\in\mathbb{F}^{r \times m}$ is upper trapezoidal, and $r$ is the rank of $B$.
\State Let $L=\begin{bmatrix}L_1 \\ L_2\end{bmatrix}$, $L_1\in\mathbb{F}^{r\times r}$. 
%\State Form 
\State Consider the block system,
\[ 
\begin{bmatrix} P & 0 \\ 0 & Q\end{bmatrix}^TM\begin{bmatrix} P & 0 \\ 0 & 

Q\end{bmatrix}=
%P^T
\begin{bmatrix} A_{11} &A_{12}& B_{1}^H   \\A_{21} &A_{22} & B_{2}^H \\ B_{1} & B_{2} & 0\end{bmatrix},
%P 
%=
%\begin{bmatrix} A_{11} & B_{1}^H &A_{12}  \\ B_{1}  & 0& B_{2}\\A_{21}  & B_{2}^H&A_{22}\end{bmatrix},
\]
where $A_{11}\in\mathbb{F}^{r\times r}$.
\State $W=L_1^{-1}A_{11}L_1^{-H}$
\State $D = -\diag(W)$
\State $Y_1 = L_1\text{tril}(W) + D$ \quad \Comment{\text{tril}($W$) extracts the lower triangular part of $W$, inclusive of the diagonal}
\State $Y_2 =
\Big(A_{21} - L_2((Y_1^H-D) 
+  DL_1^H)
\Big)L_1^{-H}$
\State $Y = \begin{bmatrix}Y_1 \\ Y_2\end{bmatrix}$
\end{algorithmic}
\end{algorithm}
Algorithm~\ref{alg:ldl-nsp} provides a way to construct a constraint-complemented partial LDL of $M$ from a rank-reduced LU factorization of $B^H$.
The key ingredient is the block identity below, which is the algebraic step underlying Schilders' factorization~\cite{schilders2009solution}.
Namely, if $n=r$ in the constraint-complemented partial LDL factorization, we have
\begin{align*}
P^TAP &= VL^H + LV^H + LDL^H,  \\
W&=L^{-1}P^TAPL^{-H} = \underbrace{L^{-1}V}_{\text{lower triangular}} + \underbrace{V^HL^{-H}}_{\text{upper triangular}}+\underbrace{D}_{\text{diagonal}}.
\end{align*}
It follows that $D$ is the negation of the diagonal part of $W$, while $Y_1$ may be obtained from the lower triangular part of $W$.
The formula for $Y_2$ then follows by equating the $(2,1)$ block in $P^TAP = VL^H + LV^H + LDL^H$.
\begin{proposition}
\label{prop:alg31_alg32_equiv}
Let $M=\begin{bmatrix} A & B^H \\ B & 0 \end{bmatrix}$ and let $P^TB^HQ=LU$ be the rank-revealing LU factorization used in Algorithm~\ref{alg:ldl-nsp}. The matrices $P,Q,Y,L,U,D$ constructed by Algorithm~\ref{alg:ldl-nsp} satisfy Definition~\ref{def:partLDL}. Moreover, they realize the same constraint-complemented partial LDL factorization obtained by carrying out the corresponding $\Gamma$-elimination sequence of Algorithm~\ref{alg:ldl-gel}.
\end{proposition}
\begin{proof}
Algorithm~\ref{alg:ldl-gel} performs a sequence of edge eliminations on nonzero entries of $B$, and therefore simultaneously carries out Gaussian elimination on $B$. For the same sequence of row and column pivots, the resulting lower- and upper-trapezoidal factors are exactly the $L$ and $U$ in the LU factorization $P^TB^HQ=LU$.
Once $P$, $Q$, $L$, and $U$ are fixed, the defining identity~\eqref{eq:part_blk_ldl} determines $D$ and $Y$ blockwise. Partition $P^TAP$ conformally with $L=\begin{bmatrix} L_1 \\ L_2\end{bmatrix}$ and write $V=\begin{bmatrix} Y_1-D \\ Y_2\end{bmatrix}$. The $(1,1)$ block of~\eqref{eq:part_blk_ldl} gives
\[
W=L_1^{-1}A_{11}L_1^{-H}=L_1^{-1}(Y_1-D)+(Y_1^H-D)L_1^{-H}+D,
\]
so $D=-\diag(W)$ and $Y_1=L_1\operatorname{tril}(W)+D$. The $(2,1)$ block gives
\[
A_{21}=Y_2L_1^H+L_2(Y_1^H-D)+L_2DL_1^H,
\]
which is exactly the formula used for $Y_2$ in Algorithm~\ref{alg:ldl-nsp}. Thus Algorithm~\ref{alg:ldl-nsp} reconstructs the same partial factorization represented explicitly by Algorithm~\ref{alg:ldl-gel}.
\end{proof}
Algorithm~\ref{alg:ldl-nsp} may be implemented with an LU factorization, a triangular inverse, and a few matrix-matrix products.
%s, triangular inversion, and pivoted LU factorization.
All three operations may be done in matrix-multiplication time~\cite{bunch1974triangular,IBARRA198245}.
Further, we expect that the algorithm satisfies reasonable stability bounds provided each component is performed stably. We note that triangular inversion is not necessarily unstable~\cite{croz1992stability}. Use of triangular solves (backward and forward substitution) instead of inversion could improve stability, and may also be done in matrix multiplication time when the number of right hand sides is the same as the number of equations.
%{ Inversion of diagonal blocks would allow for a trade-off between pure triangular inversion and solves~\cite{wicky2017communication}.}.
We leave a more complete stability analysis and numerical evaluation for future work.
Obtaining a full LDL of a saddle-point system still requires a dense LDL of the Schur complement (in addition to transforming to standard LDL format, which may be done cheaply in the same way as described at the end of Section~\ref{subsec:game}).
In the next section, we provide fast-matrix-multiplication-based algorithms for dense factorization that effectively handle low-rank scenarios.

\section{Dense Matrix Factorization in Matrix Multiplication Time}
\label{sec:app_ldl_strassen}
In this section, we discuss linear algebra subroutines for pivoted dense LU and LDL factorization of matrices over a general field, providing a new algorithm for the latter.
In this section, $A$ denotes a general dense matrix (rectangular in the LU subsection and square $H$-symmetric in the LDL subsection), rather than the leading block of a saddle-point system.
The dense recursion uses the same saddle-point reduction as Section~\ref{sec:nsldl}: when the leading recursive block is too low rank, the remaining work is reorganized through an off-diagonal block and reduced to LU.
%These results allow us to state bounds on Clifford circuit simulation time in terms of the matrix multiplication complexity exponent, $\omega$, reproducing prior work on graph state simulation~\cite{gosset2024fast}.
%The use of fast matrix multiplication in Gaussian elimination was first proposed alongside the first subcubic matrix multiplication algorithm in the seminal work by Strassen~\cite{strassen1969gaussian} in 1969.
%Five years later, Bunch and Hopcroft~\cite{bunch1974triangular} provided an approach to perform LU factorization with partial pivoting in $O(n^\omega)$ time for a full-rank $n\times n$ matrix.
For LU, we could adapt the method of Ibarra, Moran, and Hui~\cite{IBARRA198245} with a minor post-processing cost, via the method of Jeannerod~\cite{jeannerod2006lsp}.
We give a simple recursive direct LU algorithm which performs row and column pivoting and achieves the cost $O(mnr^{\omega-2})$ for $m\times n$ matrices of rank $r$.
Given that the same complexity may be achieved with existing methods~\cite{jeannerod2006lsp}, we defer the algorithm and its analysis to Appendix~\ref{app:lu}, but leverage the following result from therein\footnote{The sparse algorithms below require only the rank-revealing factorization and associated permutations; the row-echelon property is retained because it gives a convenient description of the rectangular LU output.}.
\begin{theorem}
\label{thm:LU_dense_fast}
\sloppy
Given $\B A \in \mathbb{F}^{m\times n}$, Algorithm~\ref{alg:lup} computes the factorization $\B P^T\B A\B Q = \B L \B U$ and identifies the rank $r$ of $\B A$ using
\[T_\text{LU}(m,n,r)=O(r^{\omega-2}mn)=O(\min(m,n)^{\omega-1}\max(m,n))\]
arithmetic operations.
Further, $(P^TL)^H$ is in row-echelon form.
\end{theorem}
In the above theorem, by row-echelon form, we mean any matrix $T$ that may be written in the form,
%so that $T^T$ is in a row-echelon-like form, i.e., $T$ can be written in the form,
\[
T=\begin{bmatrix}t_1 & \cdots & t_n\end{bmatrix}^H,
\qquad
t_i=\begin{bmatrix} 0 \\ \vdots \\ 0 \\ r_i \end{bmatrix},
\]
where the first entry of each $r_i$ is nonzero and
\[
\forall i\in\{2,\ldots,n\},\qquad \dim(r_i)<\dim(r_{i-1}).
\]

For LDL factorization, we must consider symmetric pivoting, and rank-deficient (sub)matrices.
As part of our LDL algorithm, we leverage the fast LU factorization of a rectangular matrix.
%We provide simple algorithms and corresponding $\omega$-dependent cost bounds below for completeness.
%We first consider dense matrices, then sparse matrices corresponding to graphs with a given treewidth decomposition.

\subsection{LDL Factorization}

\begin{algorithm}[!hp]
\caption{$[\B P,\B L,\B D,r]=$ Fast-LDL($\B A$)}\label{alg:ldl}
\begin{algorithmic}[1]
\Require $\B A\in\mathbb{F}^{n\times n}$, $\B A^H = \B A$
\Ensure $\B P^T\B A \B P = \B L \B D \B L^H$, $\B L$ is unit-diagonal and lower-triangular, $\B D$ is block diagonal with blocks of unit size or of the form $\begin{bmatrix} 0 & 1 \\ 1 & 0\end{bmatrix}$, the last $n-r$ rows and columns of $\B D$ are zero, $r$ is the rank of $\B A$
\If {$n\leq 3$}
 \State Obtain $\B P, \B L, \B D, r$ by direct factorization of $\B A$.
\Else 
\State $\B A = \begin{bmatrix} \B A_{11} & \B A_{12} \\ \B A_{12}^H & \B A_{22}\end{bmatrix}$, $\B A_{22}\in\mathbb{F}^{\lfloor n/3 \rfloor\times \lfloor n/3\rfloor}$
\State $[\B P_1, \B L_{11}, \B D_1, r_1] = \text{Fast-LDL}(\B A_{11})$
%,$A_{11}\in\mathbb{F}^{\lfloor 2n/3 \rfloor \times \lfloor 2n/3 \rfloor}$
\State $\tilde{\B P_1} = \begin{bmatrix} \B P_1[:,1:r_1]  & 0  & \B P_1[:,r_1+1:]\\ 0 & \B I & 0 \end{bmatrix}$ so $\tilde{\B P_1}\in\mathbb{F}^{n\times n}$
\State $\tilde{\B A} = \tilde{\B P}_1^T \B A \tilde{\B P}_1= \begin{bmatrix} \tilde{\B A}_{11} & \tilde{\B A}_{12} \\ \tilde{\B A}_{12}^H & \tilde{\B A}_{22}\end{bmatrix}$, $\tilde{\B A}_{11}\in\mathbb{F}^{r_1\times r_1}$
\State $\tilde{L}_{11}=L_{11}[1:r_1,1:r_1]$, $\tilde{D}_1=D_1[1:r_1,1:r_1]$
\State $\B B = \tilde{\B A}_{22} - \tilde{\B A}_{12}^H\tilde{\B A}_{11}^{-1}\tilde{\B A}_{12}=\begin{bmatrix} \B B_{11} & \B B_{12} \\ \B B_{12}^H & 0\end{bmatrix}$ with $\B B_{11}\in\mathbb{F}^{\lfloor n/3\rfloor \times \lfloor n/3\rfloor}$
\If {$r_1 \geq n/3$}
\State $[\B P_2, \B L_{22}, \B D_2, r_2] = \text{Fast-LDL}(\B B)$
\State $\B P = \tilde{\B P_1}\begin{bmatrix} \B I & 0 \\ 0 & \B P_2\end{bmatrix}$,
$\B L = \begin{bmatrix} 
\B \tilde{L}_{11} &0\\
\B P_2^T (\tilde{D}_1^{-1}\tilde{L}_{11}^{-1}\tilde{\B A}_{12})^H & \B L_{22}
\end{bmatrix}$,
 $\B D = \begin{bmatrix} \tilde{D}_1 & 0 \\ 0 & \B D_2\end{bmatrix}$
\Else
\State $[\B P_B, \B Q_B, \B L_B, \B U_B, r_B] =  \text{Fast-LU}(\B B_{12})$
\State $\tilde{\B B} =\begin{bmatrix}\B B_{11} & \B B_{12}Q_B[:,1:r_B] \\ \B Q_B[:,1:r_B]^T\B B_{12}^H & 0\end{bmatrix}$
\State $[\B P_2, \B L_{22}, \B D_2, r_2] = \text{Fast-LDL}(\tilde{\B B})$
%\State $\B P = \begin{bmatrix} \B I_{r_1} & 0 & 0\\ 0 & \B P_2 & 0 \\ 0 & 0 & \B I_{n-\lfloor n/3 \rfloor -r_B} \end{bmatrix}\begin{bmatrix} \B I_{r_1+\lfloor n/3\rfloor}  \\  0 & \B P_B\end{bmatrix}\tilde{\B P_1}$
%%\textcolor{orange}{
\State $\B P = \tilde{\B P_1} \begin{bmatrix} \B I_{r_1+\lfloor n/3\rfloor} & 0 \\ 0 & \B Q_B\end{bmatrix}\begin{bmatrix} \B I_{r_1} & 0 & 0\\ 0 & \B P_2 & 0 \\ 0 & 0 & \B I_{n-r_1-\lfloor n/3 \rfloor -r_B} \end{bmatrix}$
%}
\State $\hat{\B A}=\B P^T \B A \B P = 
\begin{bmatrix} \hat{\B A}_{11} & \hat{\B A}_{12} & \hat{\B A}_{13}\\ 
\hat{\B A}_{12}^H& \hat{\B A}_{22}&  \hat{\B A}_{23} \\
\hat{\B A}_{13}^H & \hat{\B A}_{23}^H&  \hat{\B A}_{33}
\end{bmatrix}, 
\dim(\hat{\B A}_{11})=r_1, \dim(\hat{\B A}_{22})=\lfloor n/3\rfloor+r_B$
\State $\B L = \begin{bmatrix} \tilde{L}_{11} &0 & 0\\
L_{21} & \B L_{22} & 0 \\
L_{31} & (D_2^{+}\B L_{22}^{-1}(\hat{\B A}_{23}-L_{21}\tilde{D}_1L_{31}^H))^H & \B I
\end{bmatrix},$
\Statex where $L_{21} =(\tilde{D}_1^{-1}\tilde{L}_{11}^{-1}\hat{\B A}_{12})^H$, $L_{31} = (\tilde{D}_1^{-1}\tilde{L}_{11}^{-1}\hat{\B A}_{13})^H$, and $D_2^{+}$ is the pseudoinverse of $D_2$.
%\textcolor{orange}{
%\Statex
%$\B L = \begin{bmatrix} \B L_{11}[1:r_1,1:r_1] & 0\\
%(\B D_1[1:r_1,1:r_1]^{-1} \B L_{11}[1:r_1,1:r_1]^{-1} \hat{\B A}[1:r_1,r_1+1:])^H & 
%\begin{matrix}
%\B L_{22} &  0\\
%\begin{bmatrix}
%0 & (\B U_B[:,1:r_B]^{-1}U_B[:,r_B+1:])^H
%\end{bmatrix}\B P_2 \B L_{22} & \B I
%\end{matrix}
%\end{bmatrix}$
%}
\State $\B D = \begin{bmatrix} \tilde{D}_1 & 0 & 0 \\ 0 & \B D_2 & 0 \\ 0 & 0 & 0\end{bmatrix}$
\EndIf
\State $r=r_1+r_2$
\EndIf
\end{algorithmic}
\end{algorithm}

We now provide an algorithm (Algorithm~\ref{alg:ldl}) for LDL factorization with pivoting that runs in matrix-multiplication time.
The main challenge in designing a block-recursive algorithm for LDL is that the leading block on which we recurse may be low rank.
To circumvent this, we use recursive calls on matrices of dimension $2n/3$.
If such a block is near full rank, we make satisfactory progress in the recursive elimination, and otherwise, we may reduce the overall system into a saddle-point structure.
This algorithm uses LU with pivoting as a subroutine on off-diagonal blocks, much like in Algorithm~\ref{alg:ldl-nsp}.
%low-rank factorization of certain off-diagonal blocks.
%The recursive LDL subproblems have a saddle-point structure, and it would make sense to leverage 
We could alternatively leverage Algorithm~\ref{alg:ldl-nsp} directly, and this would not affect the asymptotic cost analysis.
\begin{theorem}
\label{thm:LDL_dense_fast}
Given $\B A \in \mathbb{F}^{n\times n}$, Algorithm~\ref{alg:ldl} computes the factorization $\B P^T \B A \B P = \B L \B D \B L^H$ and identifies the rank $r$ of $\B A$ using \sloppy
\[T_\text{LDL}(n)=O(n^\omega)\]
arithmetic operations.
\end{theorem}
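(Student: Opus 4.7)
The plan is to prove correctness by induction on $n$, handling the base case $n \leq 3$ by direct factorization in $O(1)$ time, and to bound the cost via a divide-and-conquer recurrence solved with the Master theorem. For the inductive step, the first recursive call factors the leading $(n-\lfloor n/3\rfloor)\times(n-\lfloor n/3\rfloor)$ block $A_{11}$, identifying its rank $r_1$; then $\tilde P_1$ places the $r_1$ rank-determining rows and columns first so that $\tilde A_{11}=\tilde L_{11}\tilde D_1\tilde L_{11}^H$ is invertible. The crucial correctness fact is that the trailing $(n-\lfloor n/3\rfloor-r_1)\times(n-\lfloor n/3\rfloor-r_1)$ sub-block of the Schur complement $B=\tilde A_{22}-\tilde A_{12}^H\tilde A_{11}^{-1}\tilde A_{12}$ vanishes. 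This follows by direct computation: the corresponding sub-block of $\tilde A_{22}$ equals $L_1[r_1+1:,\,:r_1]\,\tilde D_1\,L_1[r_1+1:,\,:r_1]^H$ from the rank-$r_1$ factorization of $A_{11}$, and, after substituting $\tilde A_{11}^{-1}=\tilde L_{11}^{-H}\tilde D_1^{-1}\tilde L_{11}^{-1}$, the corresponding sub-block of $\tilde A_{12}^H\tilde A_{11}^{-1}\tilde A_{12}$ reduces to the same expression. Thus $B$ has saddle-point form $\begin{bmatrix}B_{11}&B_{12}\\B_{12}^H&0\end{bmatrix}$.

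The two cases are then easy to handle. In Case 1 ($r_1\geq n/3$), $B$ has dimension $n-r_1\leq 2n/3$ and we recurse directly. In Case 2 ($r_1<n/3$), a rectangular LU of $B_{12}$ via Theorem~\ref{thm:LU_dense_fast} identifies a full-rank column basis of size $r_B$; the reduced symmetric sub-problem $\tilde B$ of dimension $\lfloor n/3\rfloor+r_B\leq 2\lfloor n/3\rfloor$ contains all essential content of $B$, since the discarded columns of $B_{12}$ are linear combinations of the retained ones and contribute only zero columns to the final $L$ and $D$. In both cases, the block-LDL reassembly formulas of the algorithm (with $L_{21}$ and $L_{31}$ obtained from triangular solves against $\tilde L_{11}$ and $\tilde D_1$, and the $L_{32}$ correction involving $D_2^+$) yield the factorization $P^T A P = L D L^H$ by the standard block Schur-complement identity.

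For the cost analysis, each non-base call performs one recursive call of size $\lceil 2n/3\rceil$ and one of size at most $2n/3$, plus $O(n^\omega)$ work for the Schur complement, block matrix products, triangular inversion, and (in Case 2) one rectangular LU — all in matrix-multiplication time by Theorem~\ref{thm:LU_dense_fast}. The recurrence $T(n)\leq 2T(\lceil 2n/3\rceil)+O(n^\omega)$ has solution $T(n)=O(n^\omega)$ by the Master theorem, since $2\cdot(2/3)^\omega<1$ for any $\omega>\log_{3/2}2\approx 1.71$, well below the standing assumption $\omega>2$.

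The main obstacle will be the careful bookkeeping in Case 2: one must verify that the permutation $Q_B$ from the LU of $B_{12}$ really does place the discarded rows and columns of $B$ into trailing positions in which they can be collapsed into zero blocks of $L$ and $D$, and that the pseudoinverse correction $D_2^+ L_{22}^{-1}(\hat A_{23}-L_{21}\tilde D_1 L_{31}^H)$ is well-defined and assembles $L_{32}$ correctly even when $D_2$ itself has rank deficiencies arising from lower levels of the recursion.
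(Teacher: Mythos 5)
Your proof takes essentially the same approach as the paper's: the same recurrence $T(n)\leq 2T(\lceil 2n/3\rceil)+O(n^\omega)$ with the same case analysis on $r_1$ and the same appeal to Theorem~\ref{thm:LU_dense_fast} for the non-recursive work. In fact you supply more correctness detail than the paper does (the paper only asserts that correctness ``can be verified from the block equations''), in particular the argument that the trailing block of the Schur complement vanishes so that $B$ has saddle-point form, which is the right key fact.
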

\begin{proof}
The correctness of the algorithm can be verified based on the block equations of the LDL decomposition.
Each recursive step involves two recursive LDL calls on matrices of dimension $2n/3$ (this holds also when $r_1<n/3$, since $r_B \leq n/3$).
Additionally, an LU factorization, triangular inverses, and matrix multiplications are needed, all on matrices of dimension $O(n)$.
Hence, by Theorem~\ref{thm:LU_dense_fast} and bounds on cost of triangular inversion discussed in its proof, the cost of the LDL algorithm is
\begin{align*}
T_\text{LDL}(n)&=2T_\text{LDL}(2n/3) + O(n^\omega) = O(n^\omega).
\end{align*}
\end{proof}
The recursive nature of Algorithm~\ref{alg:ldl} is a challenge for integration with pivoting strategies such as that of Bunch and Kaufman~\cite{bunch1977some}.
A local version of any LDL pivoting strategy may be performed by applying the strategy in the base case, but this would not bound the growth factor in general.

\section{Sparse LDL Factorization}
\label{sec:sparse_ldl}

The sparse algorithm uses the same saddle-point reduction as Section~\ref{sec:nsldl}, but now it arises locally from a tree decomposition after eliminating vertices internal to leaf bags. The additional ingredient is peeling, which allows rank-deficient pieces to be propagated while preserving the treewidth bound.

The complexity of computing the Cholesky factorization of a sparse matrix without use of fast matrix multiplication is bounded by
\[O\bigg(\sum_{i=1}^n \nnz(\B \ell_i)^2\bigg),\]
where $\B \ell_i$ is the $i$th column in the $\B L$ factor.
The elimination width of a graph is the maximum value of $\nnz(\B \ell_i)$ over the columns of the Cholesky factor obtained from a given elimination ordering, and the minimum possible elimination width equals the treewidth~\cite{markov2008simulating,arnborg1985efficient,robertson1986graph,rose1970triangulated}.
Equivalently, $\tw(G)$ may be defined via tree decompositions of $G$~\cite{robertson1986graph}.
A tree decomposition of $G$ specifies a set of bags (subsets of vertices of $G$) and a tree over these bags, such that 
\begin{enumerate}
\item each vertex is assigned to at least one bag,
\item the set of bags to which any vertex is assigned is connected in the tree,
\item the end-points of any edge in $G$ are contained within some bag.
\end{enumerate}
The treewidth is the minimum, over tree decompositions, of the maximum bag size minus one.
Given a tree decomposition of width $\tau$, the standard multifrontal viewpoint yields an $O(n\tau^2)$ Cholesky algorithm, and replacing the dense frontal updates by matrix multiplication gives the familiar $O(n\tau^{\omega-1})$ bound.

%well known that the treewidth of a graph and cost of sparse matrix factorization are closely connected.

% \begin{definition}
% Let $G$ be a graph
% \end{definition}

After eliminating the vertices internal to a leaf bag, the remaining interaction with the rest of the tree has saddle-point form.
The sparse algorithm therefore applies the constraint-complemented partial LDL of Section~\ref{sec:nsldl} locally, but delays the corresponding edge eliminations until they can be executed without violating the treewidth bound.
The role of peeling is to remove linearly dependent rows of the local constraint block before those delayed updates propagate higher in the tree.

\subsection{Tree decomposition forms and transformations}
\label{sec:tree_forms}

Prior to introducing our sparse LDL algorithm, we recall a few useful results regarding transformations of tree decompositions.
In particular, we use that, for a graph with treewidth $\tau$, we can always find a tree decomposition that is a full binary tree and with $O(n/\tau)$ bags and bag size $O(\tau)$.

In general, for a tree decomposition $T$, let $b(T)$ denote its number of bags and, following notation in prior work, define
\[
\|T\|_p := \bigg(\sum_{\mathcal{B}\in T} |\mathcal{B}|^p\bigg)^{1/p},\qquad p>0.
\]

\begin{proposition}[Simplified form of {\cite[Theorem~13]{gosset2024fast}}]
\label{prop:td_few_bags}
If a graph on $n$ vertices has treewidth $t$, then it admits a rooted tree decomposition with $O(n/t)$ bags and bag size $O(t)$.
\end{proposition}
%\begin{proof}
%The cited theorem yields a stronger normal form with these same quantitative bounds.
%%Root the resulting tree arbitrarily.
%\end{proof}

\begin{proposition}[Simplified form of {\cite[Theorem~2]{
chatterjee2014optimal}}]
\label{prop:td_balanced}
Let $T$ be a rooted tree decomposition with $b(T)=b$ and maximum bag size $s$.
Then the same graph admits a rooted binary tree decomposition with $O(b)$ bags, maximum bag size $O(s)$, and height $O(\log b)$.
\end{proposition}
\begin{proof}
Apply the cited theorem to the given width-$(s-1)$ decomposition.
The resulting balanced binary tree decomposition has $O(b)$ bags, width $O(s)$, and height $O(\log b)$.
\end{proof}

\begin{corollary}
\label{cor:td_balanced_small}
If a graph on $n$ vertices has treewidth $\tau$, then it admits a rooted full binary tree decomposition with $O(n/\tau)$ bags and height $O(\log(n/\tau+1))$, while the maximum bag size is $O(\tau)$.
\end{corollary}
\begin{proof}
Apply Proposition~\ref{prop:td_few_bags}, then Proposition~\ref{prop:td_balanced}.
If some nodes are unary, add and connect leaf copies of the same bag.
This preserves the tree-decomposition property and bag sizes, changes the number of bags by at most a constant factor, and increases the height by at most $1$.
\end{proof}

We also note that Bodlaender and Hagerup give a similar result, except with a slightly looser bound on height ($O(\log n)$)
{\cite[Lemma~2.2]{BodlaenderHagerup1998}}.
Their analysis could also be combined with Proposition~\ref{prop:td_few_bags} to obtain the bounds in Corollary~\ref{cor:td_balanced_small}.

Restricting the formal algorithm to rooted full binary trees changes the parameters of the tree decomposition by at most a constant factor.
We therefore state the algorithm and the post-ordering formalism for rooted full binary trees.
This simplification allows a more concise algorithm pseudocode and analysis, but is not essential.

\subsection{LDL and LU Factorization with Peeling}

We consider a generalized LDL decomposition, which may be defined by introducing another primitive in addition to vertex and edge elimination (as described in Section~\ref{subsec:ldl_def}).
\begin{definition}[Vertex Peeling]
%Consider $A\in\mathbb{F}^{(n+1)\times (n+1)}$ and an associated graph $G$, where
%\[A = \begin{bmatrix} A_{11} & a_{21}^H \\ 
%                      a_{21}  & 0 \end{bmatrix}.\]
%If $\exists \B x\in\mathbb{F}^{n}$, such that $A_{11}\B x = a_{21}^H$, peeling the vertex in $G$ corresponding to the last row/column of $A$ is defined by the factorization,
%\[A = 
%\begin{bmatrix} I  \\  \B x^H  \end{bmatrix} A_{11}
%\begin{bmatrix} I  \\  \B x^H  \end{bmatrix}^H.\]
%We refer to the matrix $\begin{bmatrix} I  \\  \B x^H  \end{bmatrix}$ as a peeling transformation.
Consider $A\in\mathbb{F}^{(n+m+1)\times (n+m+1)}$ and an associated graph $G$, where
\[
A = \begin{bmatrix} A_{11} & A_{21}^H & a_{31}^H \\ 
                      A_{21} & 0 & 0 \\
                      a_{31} & 0 & 0 \end{bmatrix},
\]
with $A_{11}\in\mathbb{F}^{n\times n}$, $A_{21}\in\mathbb{F}^{m\times n}$, and $a_{31}\in\mathbb{F}^{1\times n}$.
If there exists $\B x\in\mathbb{F}^{m}$ such that $A_{21}^Hx = a_{31}^H$, then peeling the vertex in $G$ corresponding to the last row/column of $A$ is defined by the factorization,
\[A = 
      \begin{bmatrix} I  & 0 \\ 0 & I \\ 0 & \B x^H  \end{bmatrix} 
      \tilde{A}  
      \begin{bmatrix} I  & 0 \\ 0 & I \\ 0 & \B x^H  \end{bmatrix}^H, \qquad
      \tilde{A} = \begin{bmatrix} A_{11} & A_{21}^H  \\ A_{21} & 0  \end{bmatrix}.\]
We refer to $\begin{bmatrix} I  & 0 \\ 0 & I \\ 0 & \B x^H  \end{bmatrix}$ as a peeling transformation.
\end{definition}
%Supposing that $\tilde{A} = \begin{bmatrix} A_{11} & A_{21}^H  \\ A_{21} & 0  \end{bmatrix}$ is full rank, 
Given an LDL factorization of $\tilde{A}$, e.g., $\tilde{A}=\tilde{L}D\tilde{L}^H$, to compute a reduced LDL of $A$, we would only need the last row of $L=\begin{bmatrix} I  & 0 \\ 0 & I \\ 0 & \B x^H  \end{bmatrix}\tilde{L}$, which is given by $\begin{bmatrix} 0 & x^H\end{bmatrix}\tilde{L}$.
\begin{definition}[Peeled Implicit LDL Decomposition]\label{def:peel_ldl}
A peeled implicit LDL decomposition of a matrix $A$ is a sequence $Q_1,\ldots Q_m$ of permutations (reorderings of vertices), vertex eliminations, edge eliminations, and vertex peeling transformations that transforms the (reduced) $D\in\mathbb{F}^{\rank(A)\times\rank(A)}$ factor of an LDL decomposition of $A$ to $A$, i.e.,
\[A=\bigg(\prod_{i=1}^m Q_i\bigg)D\bigg(\prod_{i=1}^mQ_i\bigg)^H.\]
Further, $B=\prod_{i=1}^m Q_i$ is a permuted lower-trapezoidal matrix, i.e., $B=PL$ and a reduced LDL of $A$ is $A=PLDL^HP^T$.
\end{definition}
If $A=0$, the implicit LDL decomposition is composed of $\dim(A)$ vertex peeling transformations.
In general, the number of vertex peeling transformations in a peeled implicit LDL decomposition is $\dim(A)-\rank(A)$.
Given a peeled implicit LDL decomposition of $A\in\mathbb{F}^{n\times n}$, %with $k$ vertex peeling transformations,
we have a permutation $P$ and the explicit form of an LDL of the leading $\rank(A)\times \rank(A)$ full-rank block of $P^TAP$.
%Since $n-k\geq \rank(A)$, 
The peeling transformations may be used to obtain the full $L$ factor or to compute a product with $L$ implicitly.

Using a sequence of vertex/edge eliminations and peeling transformations, we will be able to ensure every vertex is connected to at most 2$\tau$ vertices when it or an edge it is connected to is eliminated, where $\tau$ denotes the treewidth parameter introduced below.
Ensuring this connectivity bound in turn bounds the number of nonzeros in the transformations composing the decomposition.
With use of only vertex and edge elimination, this is impossible, as illustrated by Example~\ref{ex:no_peel}.
\begin{example}\label{ex:no_peel}
Consider the adjacency matrix $A$ of the star graph (tree of height 1) with $n$ vertices, which has treewidth 1.
\begin{itemize}
\item An LDL of $A$ is defined by elimination of any edge in the graph.
This edge elimination eliminates the root vertex, resulting in an $L$ factor with $n-1$ nonzeros in one of its two columns.
\item
A peeled implicit LDL may be obtained by a sequence of $n-2$ peeling transformations on $n-2$ of the $n-1$ leaves of the star graph, followed by an edge elimination.
Each triangular or trapezoidal transformation in this decomposition has $1$ off-diagonal nonzero.
\end{itemize}
\end{example}
%mix of peeling as well as vertex and edge eliminations, we can define a sequence of transformations
%trans Peeling allows us to define a sequence of

Further, we may employ edge eliminations and vertex peeling transformations to compute an LU factorization.
To make the connection explicit, given a general rectangular matrix $B$, consider the $H$-symmetric matrix
\[A_B = \begin{bmatrix} 0 & B^H \\ B & 0\end{bmatrix}.\]
An edge elimination on $A_B$ corresponds to one elementary lower-triangular row operation on $B$, together with the matching upper-triangular column operation.
Similarly, peeling a vertex in $A_B$ corresponds to peeling a row or column of $B$.
Both transformations preserve the zero-diagonal block structure and block-sparsity of $A_B$.
\begin{definition}[Peeled Implicit LU Decomposition]
A peeled implicit LU decomposition of a matrix $B\in\mathbb{F}^{m\times n}$ is a sequence $(Y_1,Z_1),\ldots (Y_m, Z_m)$, such that each $(Y_i,Z_i)$ is one of the following,
\begin{itemize}
\item $Y_i$ and $Z_i$ are permutation matrices that act on rows and columns not peeled or eliminated by prior transformations,
\item $Y_i$ and $Z_i$ are elementary lower-triangular matrices with off-diagonal nonzeros only in the $j$th column, where $j-1$ is the number of prior elementary triangular transformations,
\item $Y_i$ is a peeling transformation and $Z_i=I$,
\item $Z_i$ is a peeling transformation and $Y_i=I$,
\end{itemize}
%pairs of permutations, pairs of elementary (lower- and upper-) triangular transformations, or a row peeling transformation and an identity matrix, 
such that there exists an LU factorization
\[P^TBQ = LU, \quad \text{where} \quad PL = \prod_{i=1}^m Y_i \text{ and }  QU^H=\prod_{i=1}^mZ_i.\]
\end{definition}
The elementary triangular transformations in a peeled implicit LU decomposition together give an LU factorization of the leading $\rank(B)\times \rank(B)$ block of $P^TBQ$ for some permutation matrices $P$ and $Q$.
The remaining part of the LU factorization of $B$ may be computed by multiplying the row peeling transformations with the elementary triangular transformations, or via triangular solves from the factors of the leading full-rank block of $P^TBQ$.
%, where $Q$ is the column permutation matrix.
\begin{lemma}\label{lem:ldl_to_lu}
Given a matrix $A \in \mathbb{F}^{(n+m)\times (n+m)}$ of the form,
\[A = \begin{bmatrix} 0 & B^H \\ B & 0 \end{bmatrix},\]
a peeled implicit LDL decomposition of $A$ 
%composed of only permutations, edge eliminations, and vertex peeling transformations 
yields a peeled implicit LU factorization of $B$.
\end{lemma}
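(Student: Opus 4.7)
The plan is to construct the peeled implicit LU decomposition of $B$ operation-by-operation from the given peeled implicit LDL of $A$, exploiting the block antidiagonal structure of $A$. First I would record two structural invariants preserved throughout the LDL sequence. Since $A$ has an identically zero diagonal and zero top-top and bottom-bottom blocks, no vertex elimination can occur at any stage: applying the $\Gamma$-elimination formula from Section~\ref{subsec:game} with the top-left block equal to $0$ shows that each edge elimination between a top vertex $u$ and a bottom vertex $v$ preserves both the zero-diagonal and the block antidiagonal structure, and the same holds for peeling. Consequently every edge elimination in the sequence is between a top and a bottom vertex, and the running matrix always has the form $\begin{bmatrix} 0 & -B'^H \\ B' & 0 \end{bmatrix}$ for an updated $B'$.

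Next I would translate each step of the LDL sequence into one or two steps of an LU sequence. A permutation in the LDL splits as a column permutation of $B$ (from permuting the top group) and a row permutation of $B$ (from permuting the bottom group), together forming a single permutation pair $(Y_i, Z_i)$. An edge elimination at $(u, v)$, after bringing $u,v$ to the leading positions, updates $B$ to $B' = B - b_{:,u}(b^*_{v,u})^{-1} b_{v,:}^H$, which is exactly one Gaussian-elimination step with pivot $b_{v,u}$, and yields an elementary lower-triangular factor multiplying $B$ from the left together with another whose conjugate transpose multiplies $B$ from the right. Thus each LDL edge elimination expands into a permutation pair followed by an elementary-triangular pair in the LU sequence. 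Peeling a top vertex $u$, whose condition $A_{21}^H x = a_{31}^H$ unpacks to requiring column $u$ of $B$ to lie in the span of the remaining columns, is exactly column peeling of $B$ and contributes a step $(I, Z_i)$; symmetrically, peeling a bottom vertex contributes a row-peeling step $(Y_i, I)$.

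Finally I would verify the global factorization identity. The LDL yields $A = B_A D B_A^H$ with $B_A$ permuted lower-trapezoidal and $D$ composed exclusively of $2 \times 2$ antidiagonal blocks (no $1 \times 1$ blocks, since no vertex elimination ever occurs). Expanding this identity in the top/bottom block form and using that each $Q_i$ in the LDL respects the top/bottom partition, $B_A$ itself separates, after a global permutation, into a top factor $\prod_i Y_i = PL$ and a bottom factor $\prod_i Z_i = QU^H$; the antidiagonal blocks of $D$ then convert $A = B_A D B_A^H$ into $B = PLUQ^T$, which is precisely the structure required by the definition of a peeled implicit LU.

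The main obstacle is the bookkeeping required by the ordering constraints in the peeled implicit LU definition: that the $j$-th elementary triangular transformation acts in column $j$, and that permutations and peelings act only on rows/columns not yet eliminated or peeled. Both properties follow by induction on the LDL step, once one verifies that the leading position of the residual $B$ always corresponds to the leading positions of the top and bottom groups in the current permuted $A$; but tracking this correspondence through successive reductions, particularly distinguishing which side of $B$ each permutation acts on and maintaining consistency between the top/bottom splits and the $(PL, QU^H)$ ordering, is the technically delicate part of the argument.
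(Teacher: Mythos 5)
Your proposal is correct and follows essentially the same route as the paper's own proof: ruling out vertex eliminations via the zero diagonal, identifying edge eliminations with Gaussian elimination steps on $B$, and identifying peeling of top/bottom vertices with column/row peeling of $B$. You supply more detail than the paper (explicitly verifying that the block-antidiagonal structure is invariant and that the factorization identity separates into the $PL$ and $QU^H$ products), but the underlying argument is the same.
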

\begin{proof}
Vertex eliminations on $A$ are impossible as its diagonal is zero, while any edge elimination preserves the same zero-diagonal block structure, hence any peeled implicit LDL decomposition of $A$ does not contain vertex elimination transformations.
Any edge elimination on $A$ must correspond to eliminating a nonzero in $B$ via elementary triangular transformations.
Similarly, vertex peeling transformations on $A$ may be restricted to row or column peeling transformations of $B$, since a vertex peeling transformation on $A$ acts (after permutation) on one of two types of rows, 
\begin{itemize}
\item a row of $B^H$, which must then be a linear combination of other rows of $B^H$,
\item a row of $B$, which must be a linear combination of other rows of $B$.
\end{itemize}
%Hence, the vertex peeling transformation can be expressed as a row or column peeling transformation of $B$.
%re is always an equivalent peeling transformation
%or a column peeling transformation on $B$.
%In particular, if the 
All permutations needed for edge eliminations and peeling transformations may be restricted to reorderings of rows and columns of $B$.
\end{proof}

\subsection{Sparse LDL Algorithm}

Given a tree decomposition with treewidth $\tau$, Cholesky admits an $O(n\tau^{\omega-1})$ algorithm by eliminating vertices assigned exclusively to leaf bags and performing dense updates on the resulting $O(\tau)\times O(\tau)$ frontal matrices.
We adopt the same block viewpoint for sparse LDL.
For a rooted tree decomposition, let $\rho(\mathcal{B})$ be the set of vertices
that would be eliminated in bag $\mathcal{B}$ by Cholesky, i.e., 
whose highest assigned bag in the rooted tree is $\mathcal{B}$.

By Corollary~\ref{cor:td_balanced_small}, we may transform any tree decomposition to a rooted full binary tree decomposition.
We therefore assume such a decomposition for the algorithmic formalism below.

\begin{definition}[Tree Decomposition Post-Ordering]
\label{def:tdpo}
Let $T$ be a rooted full binary tree decomposition.
A tree decomposition vertex post-ordering, denoted by $\prec$, is any total ordering obtained from a depth-first post-order traversal of the bags, with each visited bag $\mathcal{B}$ replaced by the block $\rho(\mathcal{B})$.
Equivalently, if $\mathcal{B}_X$ has children $\mathcal{B}_Y$ and $\mathcal{B}_Z$, then every vertex assigned to the subtree rooted at $\mathcal{B}_Y$ precedes every vertex assigned to the subtree rooted at $\mathcal{B}_Z$, and both subtrees precede the vertices of $\rho(\mathcal{B}_X)$.
The order within each block $\rho(\mathcal{B})$ is arbitrary.
\end{definition}
Definition~\ref{def:tdpo} is the only ordering property used in the sparse algorithm.

With a tree decomposition post-ordering, the vertices assigned exclusively to a leaf bag $\mathcal{B}_L$ with parent bag $\mathcal{B}_P$ are ordered first.
Here $\rho(\mathcal{B}_L)$ consists exactly of the vertices in $\mathcal{B}_L$ that are not also assigned to $\mathcal{B}_P$.
Let the sibling of this leaf bag be $\mathcal{B}_{S}$.
%%is also a leaf bag in the tree decomposition.
Then, we may consider the following blocking,
\[M = \begin{bmatrix} M_{11} & M_{21}^H \\ M_{21} & M_{22}\end{bmatrix},\]
where $M_{11}\in \mathbb{F}^{|\rho(\mathcal{B}_L)|\times |\rho(\mathcal{B}_L)|}$ and the number of nonzero rows in $M_{21}$ is at most $\tau$.
For Cholesky, we may factorize $M_{11}$, compute the Schur complement, and recurse on a tree decomposition without $\mathcal{B}_L$ and with $\tau$ treewidth.
In the general $H$-symmetric case, we may perform the LDL, $M_{11}=P_{11}L_{11}D_{11}L_{11}^HP_{11}^T$, but only eliminate a number of rows/columns equal to $r=\rank(M_{11})$.
After permutation, we may compute the Schur complement of the full rank block $(P_{11}^TM_{11}P_{11})[:r,:r]$, and reorder the blocks to obtain a saddle-point system,
\[M' = \begin{bmatrix} S & B^H \\ B & 0\end{bmatrix},\]
where $B\in\mathbb{F}^{(|\rho(\mathcal{B}_L)|-r)\times (n-|\rho(\mathcal{B}_L)|)}$ and $S\in \mathbb{F}^{(n-|\rho(\mathcal{B}_L)|)\times (n-|\rho(\mathcal{B}_L)|)}$.
%For example, if two leaf bags $\{1,2,3\}$ and $\{4,5,6\}$ share the parent separator $\{3,4\}$, then the induced post-ordering places the leaf-exclusive vertices $\{1,2\}$ and $\{5,6\}$ before the separator vertices $\{3,4\}$.
%After factorizing the leading block on one leaf, any rank deficiency appears in the off-diagonal block $B$, which couples the leftover rows to the separator.
%The role of peeling is to discard rows of this $B$ that are already dependent before they are allowed to interact with higher bags in the tree.
%Prior to apply the edge complementations in $F$, 

To compute the LDL factorization of the above saddle-point system while preserving treewidth, we can leverage the Schilders factorization approach to the constraint-complemented partial LDL factorization.
However, performing complementation of $B$ naively may result in eliminating an edge adjacent to a vertex that is connected to all other vertices, and in doing so we may create a dense graph and ruin the tree decomposition.
To circumvent this, as in Algorithm~\ref{alg:ldl-nsp}, we may compute the LU factorization up front to identify a set of edge eliminations, and any linearly dependent rows in $B$.
Further, rather than compute the Schur complement explicitly, we may delay edge eliminations until vertices lower in the tree are eliminated first.

However, delaying edge eliminations creates other issues.
Until we perform $\rank(B)$ edge eliminations associated with rows of $B$, other edge eliminations from different parts of the tree will in general affect $B$ (the rows of $L$ associated with $B$) and create connections between vertices that previously did not share a bag.
This second challenge is resolved by use of peeling, which allows elimination (peeling) of linearly dependent rows of $B$ up-front, ensuring the number of delayed edge eliminations at any vertex in the tree is at most $\tau$.
Example~\ref{ex:peel2} illustrates what could potentially go wrong if we do not peel linearly dependent rows.
\begin{example}\label{ex:peel2}
Given a graph $G$ with adjacency matrix $A=\begin{bmatrix} A_{11} & A_{21}^H \\ A_{21} & 0\end{bmatrix}$, where $A_{11}$ and $A_{21}$ are square and full rank, consider the matrix,
\[B =
\begin{bmatrix}
A_{11} & A_{21}^H & I \\
A_{21} & 0 & 0 \\
I & 0 & 0 
\end{bmatrix}.\]
%where $A_{21}$ is square.
%Assume $A_{12}$ is full rank, so $\rank(B)=\rank(A)$, and 
Clearly, $B$ is the adjacency matrix of a graph with at most double the treewidth of $G$.
We could peel rows/columns of $B$ to reduce it to $A$.
If we instead simply factorize $A=LDL^H$, with $A_{11}=L_{11}D_1L_{11}^H$, the full $L$ factor of $B$ will have an off-diagonal block equal to $L_{11}^{-H}D_1^{-1}$, the upper-triangular part of which is in general dense.
\end{example}

Hence, to compute the LDL for $M'$, we first eliminate vertices internal to $\mathcal{B}_S$ or its descendants, and use vertex peeling to merge the delayed edge eliminations from $\mathcal{B}_S$ with $B$, producing a new off-diagonal block, $B'$.
Then, we apply any delayed edge eliminations with edges connected to $\rho(\mathcal{B}_P)$ (until we have zeroed out the first $|\rho(\mathcal{B}_P)|$ columns of $B'$).
After this, we proceed to eliminate (via vertex eliminations, edge eliminations, delayed edge eliminations, and peeling) the vertices in $\rho(\mathcal{B}_P)$, etc..

\begin{algorithm}[!hp]
\caption{$[\{Q_i\}_{i=1}^m, S, B]=$ tree-LDL($\B A$, $T$, $\gamma$)}\label{alg:spldl}
\begin{algorithmic}[1]

\Require $\B A\in\mathbb{F}^{n\times n}$, $\B A^H = \B A$, $\B A$ has an associated graph $G=(V,E)$, which has a tree decomposition described by bags $\mathcal{B}_1,\ldots,\mathcal{B}_k\subset V=\{1,\ldots,n\}$ connected by binary full tree $T$ with root $\mathcal{B}_X$.
$\gamma$ is a non-negative integer, $\gamma \leq |\mathcal{B}_X|$.
The rows and columns of $\B A$ are ordered according to a tree decomposition post-ordering.
\Ensure $\{Q_i\}_{i=1}^m$ is a sequence of transformations as in Definition~\ref{def:peel_ldl}, giving a partial peeled implicit LDL decomposition of the form,
\[\begin{bmatrix}A_{11} & A_{12} \\ A_{21} & A_{22} \end{bmatrix} =\bigg(\prod_{i=1}^m Q_i\bigg) \begin{bmatrix} S & B^H \\ B & 0\end{bmatrix}\bigg(\prod_{i=1}^m Q_i\bigg)^H.\]
with $S\in \mathbb{F}^{(n-\gamma)\times (n-\gamma)}$ corresponding to the same block as $A_{22}$ (i.e., the last $n-\gamma$ rows and columns of $A$ are not eliminated or re-ordered internally by $\{Q_i\}_{i=1}^m$).  $B$ is full rank.
%$m=|\mathcal{B}_X\setminus\rho(\mathcal{B}_X)|$, 
%$(\ell_2,\ldots,\ell_k)$-lower-trapezoidal.
%$\hat{L}_\perp$ includes the columns of $L_\perp$, and 
%$\hat{L}_\perp^H$ is in row-echelon form and
%$U$ is upper-trapezoidal.

%$L$, $D$, and $P$ give a partial LDL factorization such that 
%\[P^T\begin{bmatrix}A & \bar{L}_\perp \\ \bar{L}_\perp^H &0\end{bmatrix}P-LDL^H = \begin{bmatrix} 0 & 0 & 0 & 0\\ 0 &S & \hat{L}_{\perp} & L_X \\ 0 & \hat{L}_{\perp}^H & 0 & 0\\ 0 &  L_X^H & 0 & 0\end{bmatrix},\]
%where $\bar{L}_\perp = \begin{bmatrix} 0 \\ \bar{L}_\perp \end{bmatrix}$, $S\in |\mathcal{B}_X\setminus\mathcal{B}_X^{(1)} |\times|\mathcal{B}_X\setminus\mathcal{B}_X^{(1)}|$ and $\hat{L}_\perp =\begin{bmatrix} L_2&  \cdots & L_k\end{bmatrix}$.
%%where $S\in |\mathcal{B}_X\setminus\mathcal{B}_X^{(1)} |\times|\mathcal{B}_X\setminus\mathcal{B}_X^{(1)}|$ and $\hat{L}_\perp =\begin{bmatrix} L_2&  \cdots & L_k\end{bmatrix}$.
\If {$k=1$}
\State Return $[\{Q_i\}_{i=1}^m, S, B]=$ tree-LDL-substep($\B A$, $\emptyset$, $\gamma$)
\EndIf
\State Let $\mathcal{B}_Y$ and $\mathcal{B}_Z$ be the children of $\mathcal{B}_X$, and let $T^{(Y)}$ and $T^{(Z)}$ be the subtrees of $T$ with roots $\mathcal{B}_Y$ and $\mathcal{B}_Z$. Let $y$ and $z$ be the total number of vertices in all bags of $T^{(Y)}$ and $T^{(Z)}$ excluding those in $\mathcal{B}_X$.
$A$ has the block structure,
\[\begin{bmatrix}
A_{11} & 0 & A_{13} & A_{14} & 0 & 0\\
0 & A_{22} &0 & A_{24} & A_{25} & 0 \\
A_{31} &0 & A_{33} & A_{34} & A_{35} & A_{36}\\
A_{41} & A_{42} & A_{43} & A_{44} & A_{45} & A_{46}\\
0 & A_{52} & A_{53} & A_{54} & A_{55} & A_{56} \\
0 &0 & A_{63} & A_{64} & A_{65} & A_{66}
\end{bmatrix}, \quad
\underbrace{\begin{bmatrix}
 A_{33} & A_{34} & A_{35} & A_{36}\\
 A_{43} & A_{44} & A_{45} & A_{46}\\
A_{53} & A_{54} & A_{55} & A_{56} \\
 A_{63} & A_{64} & A_{65} & A_{66}
% A_{33} & A_{34} & A_{35}\\
% A_{43} & A_{44} & A_{45} \\
% A_{53} & A_{54} & A_{55}
\end{bmatrix}}_{A^{(X)}} \in \mathbb{F}^{|\mathcal{B}_X|\times |\mathcal{B}_X|},\]
$A_{11}\in\mathbb{F}^{y\times y}$, 
$A_{22}\in\mathbb{F}^{z \times z}$,
$A_{31}\in\mathbb{F}^{|\mathcal{B}_X\cap \mathcal{B}_Y\setminus \mathcal{B}_Z|\times y}$,
$A_{41}\in\mathbb{F}^{|\mathcal{B}_X\cap \mathcal{B}_Y\cap \mathcal{B}_Z|\times y}$,
$A_{52}\in\mathbb{F}^{|\mathcal{B}_X\cap \mathcal{B}_Z\setminus \mathcal{B}_Y|\times z}$.
\State 
\(A^{(Y)} = 
\begin{bmatrix}
A_{11} & A_{13} & A_{14} \\
A_{31} & 0 & 0 \\
A_{41} & 0 & 0 
\end{bmatrix}
\),
\(A^{(Z)} = 
\begin{bmatrix}
A_{22} & A_{24} & A_{25} \\
A_{42} & 0 & 0 \\
A_{52} & 0 & 0 
\end{bmatrix}
\) 
\State $[\{Q_i^{(Y)}\}_{i=1}^{{m_Y}}, S^{(Y)}, B^{(Y)}]=$ tree-LDL($\B A^{(Y)}$, $T^{(Y)}$, $|\mathcal{B}_X \cap \mathcal{B}_Y|$)
\State $[\{Q_i^{(Z)}\}_{i=1}^{{m_Z}}, S^{(Z)}, B^{(Z)}]=$ tree-LDL($\B 
A^{(Z)}$, $T^{(Z)}$, $|\mathcal{B}_X \cap \mathcal{B}_Z|$)
\label{li:rec2}
\State $S^{(X)} = A^{(X)} + 
        \begin{bmatrix} 
        S^{(Y)}_{11} & 0 & S^{(Y)}_{12} & 0 \\
        0 & 0 & 0 & 0  \\
        S^{(Y)}_{21} & 0 & S^{(Y)}_{22} & 0  \\
        0 & 0 & 0 & 0 
        \end{bmatrix} 
        +
        \begin{bmatrix} 
        0 & 0 & 0 & 0 \\
        0 & S^{(Z)}_{11} &  S^{(Z)}_{12} & 0 \\
        0 & S^{(Z)}_{21} &  S^{(Z)}_{22} & 0 \\
        0 & 0 & 0 & 0 
        \end{bmatrix} 
        $
\State $B^{(X)} = \begin{bmatrix} B^{(Y)}_1 & 0 & B^{(Y)}_2 & 0 \\ 0 & B^{(Z)}_1 & B^{(Z)}_2 & 0  \end{bmatrix}$
\State $[\{Q_i^{(X)}\}_{i=1}^{m_X}, S, B]=$ tree-LDL-substep($\B S^{(X)}$, $B^{(X)}$, $\gamma$)
\State Obtain $\{Q_i\}_{i=1}^{m}$ from 
$\{Q_i^{(X)}\}_{i=1}^{m_X}$,
$\{Q_i^{(Y)}\}_{i=1}^{m_Y}$, and
$\{Q_i^{(Z)}\}_{i=1}^{m_Z}$.
\State Return $\{Q_i\}_{i=1}^{m}, S, B$
\end{algorithmic}
\end{algorithm}

%\begin{definition}[$(\ell_1,\ldots,\ell_k)$-lower-trapezoidal]
%For some integers $k\geq 1$ and $\ell_1,\ldots, \ell_k\geq 0$, with $\sum_{i=1}^k\ell_i=n$, we say a matrix $L\in\mathbb{F}^{n\times m}$ is $(\ell_1,\ldots,\ell_k)$-lower-trapezoidal if 
%$L_\perp = \begin{bmatrix} L_1&  \cdots & L_k\end{bmatrix}$, where each $L_i$, has at most $\ell_i$ columns, is zero in the first $\sum_{j=1}^{i-1}\ell_j$ rows and lower-trapezoidal in the remaining block.
%\end{definition}

\begin{algorithm}[!h]
\caption{$[\{Q_i\}_{i=1}^m, S, F]=$ tree-LDL-substep($\B A$, $B$, $\gamma$)}\label{alg:spldl_step}
\begin{algorithmic}[1]
\Require $\B A\in\mathbb{F}^{n\times n}$, $\B A^H = \B A$. $\B B\in\mathbb{F}^{k\times n}$, $k\geq 0$.
\Ensure $\{Q_i\}_{i=1}^m$ is a sequence of transformations as in Definition~\ref{def:peel_ldl}, giving a partial peeled implicit LDL decomposition of the form,
\[\begin{bmatrix}A_{11} & A_{12} & B_1^H\\ A_{21} & A_{22} & B_2^H \\
B_1 & B_2 & 0\end{bmatrix} = \bigg(\prod_{i=1}^m Q_i\bigg)\begin{bmatrix} S & F^H \\ F & 0\end{bmatrix}\bigg(\prod_{i=1}^m Q_i\bigg)^H.\]
with $S\in \mathbb{F}^{(n-\gamma)\times (n-\gamma)}$ corresponding to the same block as $A_{22}$ (i.e., the last $n-\gamma$ rows and columns of $A$ are not eliminated or re-ordered internally by $\{Q_i\}_{i=1}^m$).  $F$ is full rank.
%
%\[P^T\begin{bmatrix}A & L_\perp \\ L_\perp^H &0\end{bmatrix}P-LDL^H = \begin{bmatrix} 0 & 0 &  0\\
%0 &W &  \hat{L}_{\perp}U \\ 
%0 &  (\hat{L}_{\perp}U)^H  & 0\end{bmatrix},\]
%where $W\in \mathbb{F}^{(n-\ell)\times (n-\ell)}$, $\hat{L}_\perp$ includes the last $n-\gamma$ columns of $L_\perp$, and $\hat{L}_\perp^H$ is in row-echelon form.
%%$(\ell_2,\ldots,\ell_k)$-lower-trapezoidal.
%$U$ is upper-trapezoidal.
\State Assume $B$ is full rank. (Peel any linearly dependent columns of $B^H$ in $\begin{bmatrix} A & B^H \\ B & 0 \end{bmatrix}$).
\State Perform constraint complementation of $B_1$ in $\begin{bmatrix}A_{11} & A_{12} & B_1^H\\ A_{21} & A_{22} & B_2^H \\
B_1 & B_2 & 0\end{bmatrix}$, resulting in Schur complement,
\[Y = \begin{bmatrix} Y_{11} & Y_{12} & 0 \\ Y_{12}^H & \tilde{A}_{22} & \tilde{B}_2^H \\ 0 &\tilde{B}_2 & 0\end{bmatrix}, \]
where $Y_{11}\in\mathbb{F}^{(\gamma-\rank(B_1))\times (\gamma-\rank(B_1))}$, while $\tilde{A}_{22}$ and $\tilde{B}_2$ are the same shape as $A_{22}$ and $B_2$.
\State Compute the LDL of $Y_{11}$ and eliminate $\ell=\rank(Y_{11})$ rows and columns in $Y$, resulting in a new Schur complement,
\[Z = \begin{bmatrix} 0 & Z_{12} & 0 \\ Z_{12}^H & S & \tilde{B}_2^H \\ 0 &\tilde{B}_2 & 0\end{bmatrix},\]
where $Z_{12}\in\mathbb{F}^{(\gamma-\rank(B_1)-\rank(Y_{11}))\times (n-\gamma)}$.
\State Peel vertices corresponding to any linearly dependent columns in $\begin{bmatrix}\tilde{B}_2^H & Z_{12}^H\end{bmatrix}$ in corresponding columns of $Z$ by computing LU factorization of this matrix. Let $F^H$ be the remaining, linearly independent, subsequence of columns of $\begin{bmatrix}\tilde{B}_2^H & Z_{12}^H\end{bmatrix}$. Permute the columns in $F^H$ to be after $S$ in the Schur complement and similar with rows.
\State Define $m$, $\{Q_i\}_{i=1}^m$ based on the eliminations, permutations, and peeling steps performed above.
%\[S'=\begin{bmatrix} G & F^H \\ F & 0 \end{bmatrix},\]
%with $G\in\mathbb{F}^{(n-\ell)\times(n-\ell)}$.
%\State Form $L$ and $D$ based on the eliminations (Schur complementation, LDL, constraint complementation).
%\State Compute the LU factorization of 
%$\begin{bmatrix} L_\perp^{(2)} &F^H\end{bmatrix}=\hat{L}_\perp UQ^T$ via Algorithm~\ref{alg:lup} with $\hat{L}_\perp$ in reduced row echelon form.
%%and subsequent permutation of rows to ensure each block of $\hat{L}_\perp$, i.e., each set of columns with the index of the leading nonzero entry in the interval $\Big(\sum_{i=2}^{j} \ell_i,\sum_{i=2}^{j+1} \ell_i\Big]$ for $j\in\{1,\ldots,k-1\}$, is lower-trapezoidal.
%\State Let $W=P_\perp^T G P_\perp$.
%\State Form $P$ based on the permutations performed in all steps above.
\end{algorithmic}
\end{algorithm}

The full recursive algorithm is given in Algorithm~\ref{alg:spldl}, which leverages the subroutine defined by Algorithm~\ref{alg:spldl_step}.
Algorithm~\ref{alg:spldl} is written for rooted full binary trees to simplify the block notation at one recursive step.
If a bag has one child, the same local step is obtained by omitting the missing child contribution; if it has more than two children, recurse on each child subtree, concatenate the returned full-rank constraint blocks in any fixed child order, and apply the same local substep to the resulting block matrix.

In the following theorem, we bound the cost of Algorithm~\ref{alg:spldl}, focusing on the arithmetic operations performed.
Initial reordering and permutations throughout the algorithm would entail an additional cost that is independent of $\tau$ and at most $O(n\log n)$.
\begin{theorem}
\label{thm:LDL_sparse_fast}
Let $T$ be a rooted full binary tree decomposition of the graph $G=(V,E)$ associated with the off-diagonal part of $\B A\in\mathbb{F}^{n\times n}$, $A^H=A$.
Algorithm~\ref{alg:spldl}, interpreted on $T$ as above, computes a peeled implicit LDL decomposition of $\B A$ with
\[
O(\|T\|_\omega^\omega)=O\bigg(\sum_{\mathcal{B}\in T} |\mathcal{B}|^\omega\bigg)
\]
arithmetic operations over $\mathbb{F}$.
For each bag $\mathcal{B}$, the transformations produced at $\mathcal{B}$ act on $O(|\mathcal{B}|)$ rows and columns, and each elimination or peeling transformation has $O(|\mathcal{B}|)$ off-diagonal nonzeros.
\end{theorem}
\begin{proof}
At a bag $\mathcal{B}$, the matrix passed to Algorithm~\ref{alg:spldl_step} is supported on the rows and columns indexed by $\mathcal{B}$ together with the full-rank constraint blocks returned by its two child subproblems.
Because each such constraint block is full rank and its columns are indexed by vertices in $\mathcal{B}$, it has at most $|\mathcal{B}|$ rows.
Hence every dense matrix appearing in the local substep has dimension $O(|\mathcal{B}|)$.

The cost of Algorithm~\ref{alg:spldl_step} at bag $\mathcal{B}$ is the sum of four dense kernels on matrices of dimension $O(|\mathcal{B}|)$:
\begin{itemize}
\item constraint complementation of $B_1$, cost $O(|\mathcal{B}|^{\omega})$ by Algorithm~\ref{alg:ldl-nsp},
\item LDL factorization of $Y_{11}$, cost $O(|\mathcal{B}|^{\omega})$ by Theorem~\ref{thm:LDL_dense_fast},
\item LU factorization used to peel linearly dependent columns of $\begin{bmatrix}\tilde{B}_2^H & Z_{12}^H \end{bmatrix}$, cost $O(|\mathcal{B}|^{\omega})$ by Theorem~\ref{thm:LU_dense_fast},
\item Schur-complement and triangular-update operations, cost $O(|\mathcal{B}|^{\omega})$.
\end{itemize}
Summing over the bags of $T$ gives the stated complexity bound.

The same local support argument yields the sparsity bound.
Vertex eliminations at $\mathcal{B}$ involve only vertices in $\mathcal{B}$.
Constraint complementation and peeling involve only the rows and columns represented in the same local saddle-point block.
Thus each elimination or peeling transformation produced at $\mathcal{B}$ has $O(|\mathcal{B}|)$ off-diagonal nonzeros.
\end{proof}

\begin{corollary}
\label{cor:LDL_sparse_treewidth}
Suppose the graph of $\B A$ has treewidth $\tau$.
Then $\B A$ admits a peeled implicit LDL decomposition that can be computed in $O(n\tau^{\omega-1})$ arithmetic operations.
If $D_{\mathrm{dense}}(s)$ bounds the parallel depth of the dense $s\times s$ kernels used in Algorithm~\ref{alg:spldl_step}, then there is a balanced full binary hierarchy for which the sparse recursion has parallel depth
\[
O(\log(n/\tau+1)\,D_{\mathrm{dense}}(O(\tau))).
\]
\end{corollary}
\begin{proof}
By Corollary~\ref{cor:td_balanced_small}, there is a rooted full binary tree decomposition $T$ with $O(n/\tau)$ bags, bag size $O(\tau)$, and height $O(\log(n/\tau+1))$.
Then
\[
\|T\|_\omega^\omega = O((n/\tau)\tau^\omega)=O(n\tau^{\omega-1}).
\]
The cost bound follows from Theorem~\ref{thm:LDL_sparse_fast}.

The computation at a bag depends only on its children, so bags at the same tree depth may be processed independently.
Each level performs dense kernels on $O(\tau)\times O(\tau)$ blocks, which gives the stated depth bound.
\end{proof}

The peeled implicit LDL of a sparse matrix allows us to obtain the full LDL factorization if the matrix is close to full rank, as well as an LU factorization.
\begin{corollary}
The LDL factorization of a sparse matrix $A\in\mathbb{F}^{n\times n}$, $A^H=A$, whose off-diagonal nonzeros correspond to a graph with treewidth $\tau$ can be computed with cost $O(n\tau^{\omega-1})$ if $n-\rank(A)=O(\tau)$.
\end{corollary}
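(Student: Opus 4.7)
The plan is to derive the corollary by applying Theorem~\ref{thm:LDL_sparse_fast} and then expanding the resulting peeled implicit LDL into an explicit one, using the hypothesis $n-\rank(A)=O(\tau)$ to bound the added cost. First, I would invoke Theorem~\ref{thm:LDL_sparse_fast} on $A$ to obtain a peeled implicit LDL decomposition in $O(n\tau^{\omega-1})$ arithmetic operations. This decomposition already contains an explicit LDL of a maximal full-rank principal submatrix of some permutation $P^TAP$, coming from the elementary triangular transformations produced by the vertex and edge eliminations; only the rows of $L$ corresponding to vertices removed by peeling transformations remain to be recovered.

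Next, I would recover those missing rows. By Definition~\ref{def:peel_ldl} the total number of peeling transformations is exactly $\dim(A)-\rank(A)$, so by hypothesis there are at most $k=O(\tau)$ of them. Each peeling transformation has the form $\begin{bmatrix} I & 0\\ 0 & I\\ 0 & x^H\end{bmatrix}$ with $x$ supported on at most $O(\tau)$ entries lying in a single bag (by the off-diagonal nonzero bound in Theorem~\ref{thm:LDL_sparse_fast}), and the explicit $L$-row attached to the peeled vertex is $\begin{bmatrix}0 & x^H\end{bmatrix}\tilde L$ where $\tilde L$ is the explicit $L$ factor of the reduced system. I would batch all $k$ peeled rows into a single $k\times n$ matrix $X$ and compute $X\tilde L$ by traversing the tree decomposition in post-order, maintaining at each bag only the $O(\tau)$ columns of $X$ that interact with the currently active rows of $\tilde L$; each bag contributes a block operation on matrices of dimension $O(\tau)\times O(\tau)$ (since $k=O(\tau)$), costing $O(\tau^\omega)$, for a total of $O((n/\tau)\tau^\omega)=O(n\tau^{\omega-1})$.

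Finally, I would assemble the explicit $L$, $D$, and the permutation from the eliminations and the recovered peeled rows, and verify that the block-diagonal $D$ produced by the peeled implicit decomposition agrees with the standard $1\times 1$ and antidiagonal $2\times 2$ diagonal-block format of an LDL factorization, applying the conversion described at the end of Section~\ref{subsec:game} when necessary. Combining the two phases yields the claimed $O(n\tau^{\omega-1})$ bound.

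The step I expect to be the main obstacle is bounding the cost of propagating the peeled rows through the tree without incurring a factor depending on tree depth: a naive row-by-row expansion could in principle charge $O(\tau^2)$ work per bag per peeled row and thus $O(k\cdot n\tau)$ work overall. The resolution is to exploit that $k=O(\tau)$ and batch all peeled rows into one rank-$O(\tau)$ update, so that the per-bag cost is a single $O(\tau)$-dimensional matrix product; the full-rank hypothesis $n-\rank(A)=O(\tau)$ is used in exactly this step.
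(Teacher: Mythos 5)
Your proposal is correct and follows essentially the same route as the paper: invoke Theorem~\ref{thm:LDL_sparse_fast} to get the peeled implicit LDL (hence an explicit LDL of the leading $\rank(A)\times\rank(A)$ block) in $O(n\tau^{\omega-1})$, observe that only the $n-\rank(A)=O(\tau)$ peeled rows of $L$ are missing, and recover them in a single batched $O(n\tau^{\omega-1})$ pass. The only (immaterial) difference is the recovery mechanism: you multiply the peeling vectors through the blocks of transformations, while the paper's proof uses a triangular solve with $O(\tau)$ right-hand sides against the factors of the leading full-rank block---both options the paper itself identifies as interchangeable.
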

\begin{proof}
By Corollary~\ref{cor:LDL_sparse_treewidth}, the peeled implicit LDL of $A$ may be computed with cost $O(n\tau^{\omega-1})$.
This peeled LDL gives a permutation $P$ and an explicit LDL of the leading, full-rank, $\rank(A)\times \rank(A)$ block of $P^TAP$.
The remaining part of the LDL decomposition may be computed by triangular solve with $O(n-\rank(A))$ right-hand sides, which has cost $O(n\tau^{\omega-1})$ so long as the number of right-hand sides is bounded by $O(\tau)$.
\end{proof}
\begin{corollary}
The peeled implicit LU factorization of a sparse matrix $B\in\mathbb{F}^{m\times n}$, for which the bipartite graph with adjacency matrix,
\[A = \begin{bmatrix} 0 & B^H \\ B & 0\end{bmatrix},\]
has treewidth $\tau$, can be computed with cost $O((m+n)\tau^{\omega-1})$.
Further, the full LU factorization may be obtained in the same complexity if $\max(m,n)-\rank(B)=O(\tau)$.
\end{corollary}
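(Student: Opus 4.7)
The plan is to reduce this corollary to Theorem~\ref{thm:LDL_sparse_fast} and Lemma~\ref{lem:ldl_to_lu}, mirroring the structure of the preceding LDL corollary. First I would verify that the matrix $A=\begin{bmatrix} 0 & -B^H \\ B & 0\end{bmatrix}\in\mathbb{F}^{(m+n)\times(m+n)}$ fits the hypotheses of Theorem~\ref{thm:LDL_sparse_fast}: it is skew-$H$-symmetric (which the paper notes extends trivially from the $H$-symmetric case), and the graph associated with its off-diagonal part is exactly the bipartite graph hypothesized to have treewidth $\tau$. Applying Theorem~\ref{thm:LDL_sparse_fast} to $A$ therefore produces a peeled implicit LDL decomposition of $A$ with cost $O((m+n)\tau^{\omega-1})$.

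Next I would invoke Lemma~\ref{lem:ldl_to_lu} directly on this decomposition: the lemma guarantees that any peeled implicit LDL decomposition of a matrix of the form $\begin{bmatrix} 0 & -B^H \\ B & 0\end{bmatrix}$ translates, with no additional arithmetic, into a peeled implicit LU decomposition of $B$. Concretely, the edge eliminations of $A$ become elementary triangular eliminations on rows and columns of $B$, and each vertex peeling transformation on $A$ restricts (after a permutation) to a row or column peeling transformation of $B$. This yields the first claim at the stated cost.

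For the second claim, assume $\max(m,n)-\rank(B)=O(\tau)$. Since $\rank(B)\le\min(m,n)$, this simultaneously forces $m-\rank(B)=O(\tau)$ and $n-\rank(B)=O(\tau)$. The peeled implicit LU already provides an explicit LU factorization $L_1U_1$ of the leading $\rank(B)\times\rank(B)$ full-rank block of $PBQ^T$, so the only missing pieces are the $(m-\rank(B))\times\rank(B)$ tail of $L$ and the $\rank(B)\times(n-\rank(B))$ tail of $U$. Both are obtained by triangular solves against $U_1$ and $L_1$ with $O(\tau)$ right-hand sides. Using exactly the argument from the LDL corollary, each such solve can be executed block by block against the $O((m+n)/\tau)$ groups of $O(\tau)$ elementary transformations, each group acting on $O(\tau)$ rows/columns with at most $O(\tau)$ off-diagonal nonzeros, for a total cost of $O((m+n)\tau^{\omega-1})$.

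The only point I anticipate needing care is keeping the bookkeeping of permutations and peeling transformations consistent when restricting from $A$ to $B$, so that the ``triangular solves with $O(\tau)$ right-hand sides'' really can be amortized across blocks using fast matrix multiplication as in Theorem~\ref{thm:LDL_sparse_fast}; this is however directly parallel to the LDL corollary and does not require new machinery.
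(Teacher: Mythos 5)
Your proposal is correct and follows essentially the same route as the paper: apply Theorem~\ref{thm:LDL_sparse_fast} to the skew-symmetric augmented matrix $A$, convert the resulting peeled implicit LDL to a peeled implicit LU of $B$ via Lemma~\ref{lem:ldl_to_lu}, and recover the full factors by triangular solves with $O(\tau)$ right-hand sides. The extra care you take with the skew-symmetry hypothesis and with deducing $m-\rank(B)=O(\tau)$ and $n-\rank(B)=O(\tau)$ from $\max(m,n)-\rank(B)=O(\tau)$ is sound and only makes the argument more explicit than the paper's.
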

\begin{proof}
By Corollary~\ref{cor:LDL_sparse_treewidth}, the peeled implicit LDL of $A$ may be computed with cost $O((m+n)\tau^{\omega-1})$ after applying it to $A=\begin{bmatrix} 0 & B^H \\ B & 0\end{bmatrix}$.
By Lemma~\ref{lem:ldl_to_lu}, this peeled LDL gives permutation $P$ and $Q$ and an explicit LU of the leading, full-rank, $\rank(B)\times \rank(B)$ block of $P^TBQ$.
The remaining parts of the LU decomposition may be computed by triangular solve with $O(\max(m,n)-\rank(B))$ right-hand sides, which has cost $O((m+n)\tau^{\omega-1})$ so long as the number of right-hand sides is bounded by $O(\tau)$.
\end{proof}
Additionally, given a tree decomposition $T$ of the bipartite graph, by Theorem~\ref{thm:LDL_sparse_fast} we can compute the LU of $B$ with $O(\|T\|_\omega^{\omega})$ arithmetic operations.

\section{Inverse butterfly factorization from sparse LDL}
\label{sec:inverse_butterfly}

In this section we assume that $A$ is full rank, so that Section~\ref{sec:sparse_ldl} yields an explicit sparse $LDL^H$ factorization
\[
P^TAP = LDL^H.
\]
We use this factorization to derive rank bounds for separator blocks of $A^{-1}$ and a sparse product factorization of $A^{-1}$.
Classical elimination-based inverse representations and exact structured inverse classes already show that Gaussian elimination can encode inverse application and inverse structure compactly \cite{ursic1982inverse,Meurant1992,FasinoGemignani2002,ChandrasekaranDewildeGuPalsSunVanderVeenWhite2005}.
In particular, it is well-known that the inverse of a tridiagonal matrix is a semi-separable matrix with rank-1 off-diagonal blocks~\cite{FasinoGemignani2002}.
We generalize this notion to bounded treewidth.
We prove exact separator-rank bounds and show that the resulting inverse factorization is a standard butterfly factorization.
For background on butterfly factorization and related sparse-factor constructions see \cite{LiYangMartinHoYing2015,LiYangYing2018,YangLiYing2017,PangHoYang2020,LiuXingGuoMichielssenGhyselsLi2021}.
%By Corollary~\ref{cor:td_balanced_small}, we assume the tree decomposition of $A$ is a rooted full binary tree decomposition with $O(n/\tau)$ bags, bag size $O(\tau)$, and height $h=O(\log n)$. 
%\begin{theorem}[Bodlaender--Hagerup \cite{BodlaenderHagerup1998}]
%\label{thm:bh-balancing}
%If a graph on $n$ vertices has a tree decomposition of width $k$, then it also has a tree decomposition of width at most $3k+2$ and height $O(\log n)$.
%\end{theorem}

\subsection{Inverse complementary low rank based on treewidth}

%For a bag $\mathcal B$ in a tree decomposition $T$ and a connected component $\mathcal C$ of $T-\mathcal B$, define
%\[
%\Omega_{\mathcal B}(\mathcal C)=\left(\bigcup_{\mathcal D\in \mathcal C}\mathcal D\right)\setminus \mathcal B.
%\]
%For fixed $\mathcal B$, the sets $\Omega_{\mathcal B}(\mathcal C)$ are pairwise disjoint by the running-intersection property of the tree decomposition.
%We write $\sqcup$ for disjoint union.

The complementary low-rank property is commonly used to define matrices that admit a butterfly decomposition.
To define this property, generally one assumes the rows/columns of a matrix are associated with a binary tree of height $h=O(\log n)$.
Then, the property implies that the rank associated with the part of a matrix that describes interactions between a subtree at a level $l$ and another at level $h-l$ is low-rank.
We define a stricter version of this property, which we show is satisfied for a rank of $O(\tau)$ given the inverse of a matrix of treewidth $\tau$.
\begin{definition}[Strict complementary rank]
\label{def:separator-clr}
Consider a symmetric matrix $K\in\F^{n\times n}$ and associated graph $G$.
$K$ has strict complementary rank $r$, if there exists a one-to-one mapping between vertices in $G$ and a binary tree, such that for any node $u$ in the binary tree, the subtree $S$ rooted at $u$ and the remaining nodes in the tree $V$ satisfy
\[\rank(K_{S,V})\leq r,\]
where $K_{S,V}\in\mathbb{F}^{|S|\times |V|}$ is the off-diagonal block of a permutation of $K$ where vertices in $S$ are ordered before vertices in $V$.
%for some permutation matrix $P$, and blocking $b_1,\ldots, b_k$, where $k=O(n/r)$ and $\sum_{i=1}^k b_i = n$, any off-diagonal block $A_{21}\in\mathbb{F}^{n_1\times n_2}$, where for some $l<k$, $n_1= \sum_{i=1}^l b_i$, and $n_2=\sum_{i=l+1}^k b_i$, $\rnk(A_{21})\leq r$.
% with respect to a tree decomposition $T$ if for every bag $\mathcal B$ of $T$ and every partition
%\[
%\mathfrak C_1\sqcup \mathfrak C_2
%\]
%of the connected components of $T-\mathcal B$ into two nonempty families, the index sets
%\[
%U_s=\bigcup_{\mathcal C\in \mathfrak C_s}\Omega_{\mathcal B}(\mathcal C),\qquad s\in\{1,2\},
%\]
%satisfy
%\[
%\rnk\bigl(K_{U_1,U_2}\bigr)\le r.
%\]
%The pairwise formulation, in which each $\mathfrak C_s$ consists of a single component, is the special case obtained by taking singleton families.
\end{definition}

\begin{theorem}[Treewidth-based complementary rank of the inverse]
\label{thm:inverse-separator-complementary}
Let $A\in\F^{n\times n}$ be invertible and $A^H=A$. 
Let the graph of $A$ have treewidth $\tau$.
Then $A^{-1}$ has strict complementary rank $O(\tau)$.
% exact separator complementary low rank of width $O(\tau)$ with respect to $T$. More precisely, for every bag $\mathcal B$ of $T$ and every partition $\mathfrak C_1\sqcup\mathfrak C_2$ of the connected components of $T-\mathcal B$ into two nonempty families,
%\[
%\rnk\bigl((A^{-1})_{U_1,U_2}\bigr)\le |\mathcal B|,
%\]
%where $U_s=\bigcup_{\mathcal C\in\mathfrak C_s}\Omega_{\mathcal B}(\mathcal C)$.
\end{theorem}

\begin{proof}
By Corollary~\ref{cor:td_balanced_small}, there exists a full binary tree decomposition $T$ of the graph of $A$ with width $O(\tau)$ and height $O(\log(n/\tau + 1))$
Associate an $n$-node binary tree $\hat{T}$ with the rows/columns of $A$ by mapping the vertices eliminated in any bag, $\rho(\mathcal{B})$, to a chain (path graph) and connecting the root of any children of $\mathcal{B}$ in $T$ to the leaf in this chain (this also yields a valid elimination tree for $A$).
Consider any vertex $u \in\hat{T}$, the vertices $S$ in the subtree rooted at $u$, and the remainder of the vertices $V$.
Let $\mathcal{B}$ be the tree decomposition bag in $T$ that satisfies $u\in \rho(\mathcal{B})$.

Now, add a unary duplicate $\mathcal{B}'$ of $\mathcal{B}$ as the parent of $\mathcal{B}$, and re-root the tree so that $\mathcal{B}'$ is the root, yielding a binary tree decomposition $T'$.
Assume wlog that $A$ is ordered according to a tree decomposition post ordering for $T'$.
Hence if $|S\setminus \mathcal{B}| = n_1$, $|V\setminus \mathcal{B}| = n_2$, and $|\mathcal{B}|=n_3$, we have that
\[A = \begin{bmatrix}A_{11} & 0 & A_{13} \\ 0 & A_{22} & A_{23} \\ A_{31} & A_{32} & A_{33}\end{bmatrix},\]
where $A_{ii}\in\mathbb{F}^{n_i\times n_i}$ for $i\in\{1,2,3\}$.
Now, consider the $L$ factor in the LDL of $A$,
\[L = \begin{bmatrix}L_{11} & 0 & 0 \\ 0 & L_{22} &  0 \\ L_{31} & L_{32} & L_{33}\end{bmatrix}.\]
We have that
\begin{align*}
A^{-1} =& L^{-H}D^{-1}L^{-1} = \begin{bmatrix} L_{11}^{-H}D_1^{-1}L_{11}^{-1} & 0 & 0 \\ 0 & L_{22}^{-H}D_2^{-1}L_{22}^{-1} &0 \\ 0 & 0 & 0\end{bmatrix} + U^HD_3^{-1}U, \\
& U =  \begin{bmatrix} -L_{33}^{-1}L_{31}L_{11}^{-1} & -L_{33}^{-1}L_{32}L_{22}^{-1} & L_{33}^{-1} \end{bmatrix}.
\end{align*}
Hence, the off-diagonal block in $A^{-1}$ connecting $S$ and $V$ is of rank at most $\rnk(U) = O(\tau)$.
The strict complementary rank is hence bounded by $O(\tau)$ since permuting $O(\tau)$ rows/columns in $\mathcal{B}$ to the end changes the rank of the off-diagonal block considered in Definition~\ref{def:separator-clr} by at most $|\mathcal{B}|\leq \tau$.

\end{proof}

%The same factorization also shows that one may adjoin the separator vertices $S=\mathcal B$ to either side of the bipartition without changing the bound $|\mathcal B|$: the resulting off-diagonal block still factors through $G_S$. We state Theorem~\ref{thm:inverse-separator-complementary} on the sets $V\setminus S$ because this isolates the complementary part of the inverse associated with the separator and gives the cleanest exact nonasymptotic bound.
%
%For the product factorization we now fix the balanced rooted full binary decomposition. As in Section~\ref{sec:sparse_ldl}, for each bag $\mathcal B$ let $\rho(\mathcal B)$ denote the set of vertices assigned to $\mathcal B$. We write $\mathcal C\preceq \mathcal B$ when $\mathcal C$ is a descendant of $\mathcal B$ in the rooted tree, possibly with $\mathcal C=\mathcal B$, and define
%\[
%I(\mathcal B)=\bigcup_{\mathcal C\preceq \mathcal B}\rho(\mathcal C).
%\]
%If $\mathcal B$ has children $\mathcal B_L$ and $\mathcal B_R$, write
%\[
%I_L=I(\mathcal B_L),\qquad I_R=I(\mathcal B_R),\qquad S=\rho(\mathcal B),
%\]
%so that
%\[
%I(\mathcal B)=I_L\sqcup I_R\sqcup S.
%\]
%The separator-rank theorem above is the exact complementary low-rank property that underlies the butterfly factorization: at an internal bag, the two child index sets $I_L$ and $I_R$ interact only through the separator block $S$.

\subsection{Butterfly decomposition of low-treewidth matrix inverse}

Following~\cite{LiYangMartinHoYing2015}, we define the butterfly decomposition of a matrix, which captures off-diagonal low-rank structure in a tree-based view of the rows and columns of a matrix.
\begin{definition}[Butterfly decomposition]
Given an $H$-symmetric matrix $K$ of strict complementary rank $r$, a butterfly decomposition of $K$ is 
\[K=B_1\cdots B_k,\]
where $k=O(\log n)$ and each $B_i$ consists of a diagonal part and at most $\lceil n/r\rceil$ nonzero blocks of size $r\times r$.
We refer to $r$ as the rank of the butterfly decomposition.
\end{definition}
Consistent with prior literature, we do not restrict the position of the nonzero blocks in each $B_i$, but to the best of our understanding, the block-structure of the factorization we obtain is essentially the same as in papers introducing the butterfly decomposition~\cite{LiYangMartinHoYing2015}.

\begin{theorem}
Consider an invertible $H$-symmetric matrix $A\in\mathbb{F}^{n\times n}$ associated with graph $G$.
If $G$ has treewidth $\tau$, $A^{-1}$ admits a butterfly factorization of rank $O(\tau)$.
\end{theorem}
\begin{proof}
Recall the form of $A^{-1}$ derived in the proof of Theorem~\ref{thm:inverse-separator-complementary},
\begin{align*}
A^{-1} =& \begin{bmatrix} L_{11}^{-H}D_1^{-1}L_{11}^{-1} & 0 & 0 \\ 0 & L_{22}^{-H}D_2^{-1}L_{22}^{-1} &0 \\ 0 & 0 & 0\end{bmatrix} + U^HD_3^{-1}U, \\
& U =  \begin{bmatrix} -L_{33}^{-1}L_{31}L_{11}^{-1} & -L_{33}^{-1}L_{32}L_{22}^{-1} & L_{33}^{-1} \end{bmatrix}.
\end{align*}
We can alternatively express $L^{-1}$ as
\begin{align}
L^{-1}
&=
\begin{bmatrix}
I & 0 & 0  \\
0 & I & 0\\
0 & 0 &L_{33}^{-1}
\end{bmatrix}
\begin{bmatrix}
I & 0 & 0  \\
0 & I & 0\\
-L_{31} & -L_{32} & I 
\end{bmatrix}
\begin{bmatrix}
L_{11}^{-1} & 0 & 0  \\
0 & L_{22}^{-1} & 0\\
0 &  0 &I
\end{bmatrix}\label{eq:tril}.
\end{align}
Now, assume that the bag $\mathcal{B}$ associated with vertices in $L_{33}$, is the root of $T$, and $T$ is a complete binary tree decomposition (we may obtain a complete tree with height $O(\log(n/\tau+1))$ by copying bags for any missing nodes in a full binary tree).
Then, we have that $L_{11}$ and $L_{22}$ are factors obtained from a tree decomposition of the two child subtrees of $\mathcal{B}$ and hence admit the same recursive structure as $L$.
Hence, we may express
\begin{align*}
\begin{bmatrix}
L_{11}^{-1} & 0 & 0  \\
0 & L_{22}^{-1} & 0\\
0 &  0 &I
\end{bmatrix} &= \begin{bmatrix} R_1V_1Z_1 & 0 & 0 \\ 0 & R_2V_2Z_2 & 0 \\ 0 & 0 & I\end{bmatrix} \\
&= 
\begin{bmatrix} R_1 & 0 & 0 \\ 0 & R_2 & 0 \\ 0 & 0 & I\end{bmatrix}
\begin{bmatrix} V_1 & 0 & 0 \\ 0 & V_2 & 0 \\ 0 & 0 & I\end{bmatrix}
\begin{bmatrix} Z_1 & 0 & 0 \\ 0 & Z_2 & 0 \\ 0 & 0 & I\end{bmatrix},
\end{align*}
where $R_i$, $V_i$, and $Z_i$ are defined in the same way as \eqref{eq:tril} for each $i$.
The matrices $R_i$ and $V_i$, for each $i$, have $O(n\tau)$ nonzeros, present in $O(n/\tau)$ blocks.
Then, $Z_1$ and $Z_2$ themselves are block-diagonal with four inverse $L$ factors, associated with four subtrees, rooted at the grand-children of the root.
Hence, we may recursively expand them to obtain $L$ as a product of $2h+1$ block-sparse matrices (for a tree of height $h$), each with at most $O(n\tau)$ nonzeros.
Inserting this expansion into $A^{-1}=L^{-H}D^{-1}L^{-1}$, we have obtained a butterfly decomposition.
\end{proof}
The cost and parallel depth of obtaining this butterfly decomposition is the same as needed for LDL, since the butterfly factors are obtained directly from the entries of $L$, plus the inversion of $O(\tau)\times O(\tau)$ diagonal blocks of $L$.
This connection can also be extended to nonsymmetric matrices, by the same embedding we use to reduce LU to LDL.

\section{Conclusion}
%Section~\ref{sec:nsldl} gives an LDL formulation of the null-space approach for saddle-point systems, and Section~\ref{sec:app_ldl_strassen} places dense LU and dense LDL within the same fast-matrix-multiplication framework.
%The main result is the sparse factorization theorem of Section~\ref{sec:sparse_ldl}, which shows that bounded-treewidth sparse symmetric elimination can be organized around saddle-point reductions, null-space factorizations, and peeling while retaining the target complexity and fill bounds.

For an $n\times n$ matrix $A$ corresponding to a graph with treewidth $\tau$, this paper provides $O(n\tau^{\omega-1})$ algorithms to identify the rank of $A$, to compute its inertia (if symmetric and real or complex), to obtain a low-rank factorization in implicit form, or to compute a full LDL or LU factorization of $A$ if it is near full rank.
Section~\ref{sec:inverse_butterfly} also shows that for a full-rank matrix with bounded-treewidth, its inverse is complementary low rank and admits a butterfly factorization with rank $O(\tau)$.
A recent independent development following this work is the treewidth-based LU algorithm of F\"urer, Hoppen, and Trevisan~\cite{furer2025fast}.
Our work differs in considering LDL, fast matrix multiplication, and leveraging insights from saddle-point systems, but both approaches are similar in that only the leading square full-rank block can be factorized explicitly without additional cost.
Hence, when $A$ is low rank or somewhat low rank (e.g., $\rank(A)=n/2$), the problem of computing the explicit form of the non-trivial lower-trapezoidal part of the $L$ factor
%in a low rank factorization
%(when $A=A^H$, finding $U\in\mathbb{F}^{n\times\rank(A)}$ such that $A=UU^H$, and, in general, finding $U,V\in\mathbb{F}^{n\times\rank(A)}$ such that $A=UV^H$)
appears to be harder\footnote{It is also unclear to us whether $O(n\tau^2)$ complexity is possible for low rank LDL and LU.}.
For LU, the same challenge arises in factorization of rectangular sparse matrices.
Efficient algorithms for this problem or lower bounds on its complexity are an important subject for further study.
The utility of the proposed implicit LDL and LU decompositions is also of interest, e.g., while the peeled implicit LDL factorization may not give us the factor $L$, it allows us to compute $L^HX$ for any $X\in\mathbb{F}^{n\times \tau}$ in $O(n\tau^{\omega-1})$ time via application of blocks of peeling or elimination transformations.

%when encountering rank-deficient submatrices.
%For sparse matrices, naive (as opposed to implicit) application of vertex and edge elimination in LDL does not appear to suffice to attain the correct complexity and minimize fill.

Our complexity bounds focus on achieving $\omega$-dependent cost, but the proposed algorithms also benefit from reducing work to large matrix-matrix products, for example through improved communication efficiency.
Additional open questions remain with regard to the stability of the proposed algorithms and their integration with pivoting strategies.
For sparse factorizations, we would argue that minimizing fill, as achieved by the algorithm in Section~\ref{sec:sparse_ldl}, is higher priority than minimizing pivot growth.
Even when round-off error results in an approximate factorization, the factorization could be used for preconditioning, e.g., via direct approximation of the matrix or (in the context of saddle point systems) via constraint preconditioning~\cite{keller2000constraint}.

Our paper also builds a direct general connection between low-treewidth and low-rank butterfly factorization.
Extension of this result to consider (hierarchically) semi-separable matrices would also be of interest.

\section{Acknowledgments}
We are grateful to Yuchen Pang for numerical verification of the dense LU and LDL algorithms over GF(2), as well as Claude-Pierre Jeannerod for helpful comments on an earlier version of the manuscript.
This research was supported by the United States Department of Energy (DOE) Advanced Scientific Computing Research program via award DE-SC0023483.
\bibliographystyle{siamplain}
\bibliography{references,inverse_butterfly_refs}

@article{BodlaenderHagerup1998,
  author  = {Hans L. Bodlaender and Torben Hagerup},
  title   = {Parallel Algorithms with Optimal Speedup for Bounded Treewidth},
  journal = {SIAM Journal on Computing},
  year    = {1998},
  volume  = {27},
  number  = {6},
  pages   = {1725--1746},
  doi     = {10.1137/S0097539795289859}
}

@article{LiYangMartinHoYing2015,
  author  = {Yingzhou Li and Haizhao Yang and Eileen R. Martin and Kenneth L. Ho and Lexing Ying},
  title   = {Butterfly Factorization},
  journal = {Multiscale Modeling \& Simulation},
  year    = {2015},
  volume  = {13},
  number  = {2},
  pages   = {714--732},
  doi     = {10.1137/15M1007173}
}

@article{LiYangYing2018,
  author  = {Yingzhou Li and Haizhao Yang and Lexing Ying},
  title   = {Multidimensional Butterfly Factorization},
  journal = {Applied and Computational Harmonic Analysis},
  year    = {2018},
  volume  = {44},
  number  = {3},
  pages   = {737--758},
  doi     = {10.1016/j.acha.2017.04.002}
}

@article{YangLiYing2017,
  author  = {Haizhao Yang and Yingzhou Li and Lexing Ying},
  title   = {Interpolative Butterfly Factorization},
  journal = {SIAM Journal on Scientific Computing},
  year    = {2017},
  volume  = {39},
  number  = {2},
  pages   = {A503--A531},
  doi     = {10.1137/16M1074941}
}

@article{PangHoYang2020,
  author  = {Qiyuan Pang and Kenneth L. Ho and Haizhao Yang},
  title   = {Interpolative Decomposition Butterfly Factorization},
  journal = {SIAM Journal on Scientific Computing},
  year    = {2020},
  volume  = {42},
  number  = {1},
  pages   = {A110--A134},
  doi     = {10.1137/19M1294873}
}

@article{LiuXingGuoMichielssenGhyselsLi2021,
  author  = {Yang Liu and Xin Xing and Han Guo and Eric Michielssen and Pieter Ghysels and Xiaoye S. Li},
  title   = {Butterfly Factorization via Randomized Matrix-Vector Multiplications},
  journal = {SIAM Journal on Matrix Analysis and Applications},
  year    = {2021},
  volume  = {42},
  number  = {2},
  pages   = {A883--A907},
  doi     = {10.1137/20M1315853}
}

@article{Meurant1992,
  author  = {G{\'e}rard Meurant},
  title   = {A Review on the Inverse of Symmetric Tridiagonal and Block Tridiagonal Matrices},
  journal = {SIAM Journal on Matrix Analysis and Applications},
  year    = {1992},
  volume  = {13},
  number  = {3},
  pages   = {707--728},
  doi     = {10.1137/0613045}
}

@article{FasinoGemignani2002,
  author  = {Dario Fasino and Luca Gemignani},
  title   = {Structural and Computational Properties of Possibly Singular Semiseparable Matrices},
  journal = {Linear Algebra and its Applications},
  year    = {2002},
  volume  = {340},
  number  = {1--3},
  pages   = {183--198},
  doi     = {10.1016/S0024-3795(01)00404-9}
}

@article{ChandrasekaranDewildeGuPalsSunVanderVeenWhite2005,
  author  = {Shivkumar Chandrasekaran and Patrick M. Dewilde and Ming Gu and T. Pals and X. Sun and Alle-Jan van der Veen and Daniel White},
  title   = {Some Fast Algorithms for Sequentially Semiseparable Representations},
  journal = {SIAM Journal on Matrix Analysis and Applications},
  year    = {2005},
  volume  = {27},
  number  = {2},
  pages   = {341--364},
  doi     = {10.1137/S0895479802405884}
}

@article{van2004graphical,
  title = {Graphical description of the action of local {Clifford} transformations on graph states},
  author = {Van den Nest, Maarten and Dehaene, Jeroen and De Moor, Bart},
  journal = {Phys. Rev. A},
  volume = {69},
  issue = {2},
  pages = {022316},
  numpages = {7},
  year = {2004},
  month = Feb,
  publisher = {American Physical Society},
  doi = {10.1103/PhysRevA.69.022316},
  url = {https://link.aps.org/doi/10.1103/PhysRevA.69.022316}
}

@article{anders2006fast,
  title = {Fast simulation of stabilizer circuits using a graph-state representation},
  author = {Anders, Simon and Briegel, Hans J.},
  journal = {Phys. Rev. A},
  volume = {73},
  issue = {2},
  pages = {022334},
  numpages = {9},
  year = {2006},
  month = {Feb},
  publisher = {American Physical Society},
  doi = {10.1103/PhysRevA.73.022334},
  url = {https://link.aps.org/doi/10.1103/PhysRevA.73.022334}
}

@article{gosset2024fast,
  title={Fast simulation of planar {Clifford} circuits},
  author={Gosset, David and Grier, Daniel and Kerzner, Alex and Schaeffer, Luke},
  journal={Quantum},
  volume={8},
  pages={1251},
  year={2024},
  publisher={Verein zur F{\"o}rderung des Open Access Publizierens in den Quantenwissenschaften}
}

@article{markov2008simulating,
  title={Simulating quantum computation by contracting tensor networks},
  author={Markov, Igor L and Shi, Yaoyun},
  journal={SIAM Journal on Computing},
  volume={38},
  number={3},
  pages={963--981},
  year={2008},
  publisher={SIAM}
}

@article{bouchet1993recognizing,
  title={Recognizing locally equivalent graphs},
  author={Bouchet, Andr{\'e}},
  journal={Discrete Mathematics},
  volume={114},
  number={1-3},
  pages={75--86},
  year={1993},
  publisher={Elsevier}
}

@article{de2022fast,
  title={Fast Stabiliser Simulation with Quadratic Form Expansions},
  author={de Beaudrap, Niel and Herbert, Steven},
  journal={Quantum},
  volume={6},
  pages={803},
  year={2022},
  publisher={Verein zur F{\"o}rderung des Open Access Publizierens in den Quantenwissenschaften}
}

@article{aaronson2004improved,
  title={Improved simulation of stabilizer circuits},
  author={Aaronson, Scott and Gottesman, Daniel},
  journal={Physical Review A—Atomic, Molecular, and Optical Physics},
  volume={70},
  number={5},
  pages={052328},
  year={2004},
  publisher={APS}
}

@article{dehaene2003clifford,
  title={Clifford group, stabilizer states, and linear and quadratic operations over {GF(2)}},
  author={Dehaene, Jeroen and De Moor, Bart},
  journal={Physical Review A},
  volume={68},
  number={4},
  pages={042318},
  year={2003},
  publisher={APS}
}

@article{strassen1969gaussian,
year={1969},
issn={0029-599X},
journal={Numerische Mathematik},
volume={13},
number={4},
doi={10.1007/BF02165411},
title={Gaussian elimination is not optimal},
url={http://dx.doi.org/10.1007/BF02165411},
publisher={Springer-Verlag},
author={Strassen, Volker},
pages={354-356},
language={English}
}

@inproceedings{williams2024new,
  title={New bounds for matrix multiplication: from alpha to omega},
  author={Williams, Virginia Vassilevska and Xu, Yinzhan and Xu, Zixuan and Zhou, Renfei},
  booktitle={Proceedings of the 2024 Annual ACM-SIAM Symposium on Discrete Algorithms (SODA)},
  pages={3792--3835},
  year={2024},
  organization={SIAM}
}

@article{bunch1974triangular,
  title={Triangular factorization and inversion by fast matrix multiplication},
  author={Bunch, James R and Hopcroft, John E},
  journal={Mathematics of Computation},
  volume={28},
  number={125},
  pages={231--236},
  year={1974}
}

@article{arnborg1985efficient,
  title={Efficient algorithms for combinatorial problems on graphs with bounded decomposability--a survey},
  author={Arnborg, Stefan},
  journal={BIT Numerical Mathematics},
  volume={25},
  number={1},
  pages={1--23},
  year={1985},
  publisher={Kluwer Academic Publishers Dordrecht}
}

@article{robertson1986graph,
  title={Graph minors. {II}. {Algorithmic} aspects of tree-width},
  author={Robertson, Neil and Seymour, Paul D.},
  journal={Journal of algorithms},
  volume={7},
  number={3},
  pages={309--322},
  year={1986},
  publisher={Elsevier}
}

@article{rose1970triangulated,
  title={Triangulated graphs and the elimination process},
  author={Rose, Donald J},
  journal={Journal of Mathematical Analysis and Applications},
  volume={32},
  number={3},
  pages={597--609},
  year={1970},
  publisher={Elsevier}
}

@article{pestana2016null,
  title={Null-space preconditioners for saddle point systems},
  author={Pestana, Jennifer and Rees, Tyrone},
  journal={SIAM Journal on Matrix Analysis and Applications},
  volume={37},
  number={3},
  pages={1103--1128},
  year={2016},
  publisher={SIAM}
}

@article{rees2014null,
  title={The null-space method and its relationship with matrix factorizations for sparse saddle point systems},
  author={Rees, Tyrone and Scott, Jennifer},
  journal={STFC Rutherford Appleton Laboratory},
  year={2014}
}

@article{schilders2009solution,
  title={Solution of indefinite linear systems using an {LQ} decomposition for the linear constraints},
  author={Schilders, Wil HA},
  journal={Linear algebra and its applications},
  volume={431},
  number={3-4},
  pages={381--395},
  year={2009},
  publisher={Elsevier}
}

@article{IBARRA198245,
title = {A generalization of the fast {LUP} matrix decomposition algorithm and applications},
journal = {Journal of Algorithms},
volume = {3},
number = {1},
pages = {45-56},
year = {1982},
issn = {0196-6774},
doi = {https://doi.org/10.1016/0196-6774(82)90007-4},
url = {https://www.sciencedirect.com/science/article/pii/0196677482900074},
author = {Oscar H Ibarra and Shlomo Moran and Roger Hui},
}

@article{higham1997stability,
  title={Stability of the diagonal pivoting method with partial pivoting},
  author={Higham, Nicholas J},
  journal={SIAM Journal on Matrix Analysis and Applications},
  volume={18},
  number={1},
  pages={52--65},
  year={1997},
  publisher={SIAM}
}

@article{gould1999modified,
  title={On modified factorizations for large-scale linearly constrained optimization},
  author={Gould, Nicholas Ian Mark},
  journal={SIAM Journal on Optimization},
  volume={9},
  number={4},
  pages={1041--1063},
  year={1999},
  publisher={SIAM}
}

@book{Pan:1984:MMF:2212,
 author = {Pan, Victor},
 title = {How to Multiply Matrices Faster},
 year = {1984},
 isbn = {3-387-13866-8},
 publisher = {Springer-Verlag New York, Inc.},
 address = {New York, NY, USA},
}

@inproceedings{coppersmith1987matrix,
  title={Matrix multiplication via arithmetic progressions},
  author={Coppersmith, Don and Winograd, Shmuel},
  booktitle={Proceedings of the nineteenth annual ACM symposium on Theory of computing},
  pages={1--6},
  year={1987}
}

@inproceedings{le2014powers,
  title={Powers of tensors and fast matrix multiplication},
  author={Le Gall, Fran{\c{c}}ois},
  booktitle={Proceedings of the 39th international symposium on symbolic and algebraic computation},
  pages={296--303},
  year={2014}
}

@incollection{hein2006entanglement,
  title={Entanglement in graph states and its applications},
  author={Hein, Marc and D{\"u}r, Wolfgang and Eisert, Jens and Raussendorf, Robert and Van den Nest, Maarten and Briegel, H-J},
  booktitle={Quantum computers, algorithms and chaos},
  pages={115--218},
  year={2006},
  publisher={IOS Press}
}

@article{benzi2005numerical,
  title={Numerical solution of saddle point problems},
  author={Benzi, Michele and Golub, Gene H and Liesen, J{\"o}rg},
  journal={Acta numerica},
  volume={14},
  pages={1--137},
  year={2005},
  publisher={Cambridge University Press}
}

@article{keller2000constraint,
  title={Constraint preconditioning for indefinite linear systems},
  author={Keller, Carsten and Gould, Nicholas IM and Wathen, Andrew J},
  journal={SIAM Journal on Matrix Analysis and Applications},
  volume={21},
  number={4},
  pages={1300--1317},
  year={2000},
  publisher={SIAM}
}

@article{gould2001solution,
  title={On the solution of equality constrained quadratic programming problems arising in optimization},
  author={Gould, Nicholas IM and Hribar, Mary E and Nocedal, Jorge},
  journal={SIAM Journal on Scientific Computing},
  volume={23},
  number={4},
  pages={1376--1395},
  year={2001},
  publisher={SIAM}
}

@article{dyn1983numerical,
  title={The numerical solution of equality constrained quadratic programming problems},
  author={Dyn, Nira and Ferguson, Warren E},
  journal={Mathematics of Computation},
  volume={41},
  number={163},
  pages={165--170},
  year={1983}
}

@article{dollar2007using,
  title={Using constraint preconditioners with regularized saddle-point problems},
  author={Dollar, H Sue and Gould, Nicholas IM and Schilders, Wil HA and Wathen, Andy J},
  journal={Computational optimization and applications},
  volume={36},
  number={2-3},
  pages={249--270},
  year={2007},
  publisher={Springer}
}

@article{golub2003solving,
  title={On solving block-structured indefinite linear systems},
  author={Golub, Gene H and Greif, Chen},
  journal={SIAM Journal on Scientific Computing},
  volume={24},
  number={6},
  pages={2076--2092},
  year={2003},
  publisher={SIAM}
}

@book{antoniou2007practical,
  title={Practical optimization},
  author={Antoniou, Andreas and Lu, Wu-Sheng},
  year={2007},
  publisher={Springer}
}

@article{bunch1977some,
  title={Some stable methods for calculating inertia and solving symmetric linear systems},
  author={Bunch, James R and Kaufman, Linda},
  journal={Mathematics of computation},
  pages={163--179},
  year={1977},
  publisher={JSTOR}
}

@misc{dongarra1990solving,
  title={Solving Linear Systems on Vector and Shared Memory Computers},
  author={Dongarra, Jack J and Duff, Iain S and Sorensen, Danny C and Vorst, Henk Van Der},
  year={1990},
  publisher={Society for Industrial and Applied Mathematics}
}

@article{duff1991factorization,
  title={The factorization of sparse symmetric indefinite matrices},
  author={Duff, Iain S and Gould, Nick IM and Reid, John K and Scott, Jennifer A and Turner, Kathryn},
  journal={IMA Journal of Numerical Analysis},
  volume={11},
  number={2},
  pages={181--204},
  year={1991},
  publisher={Oxford University Press}
}

@article{duff1979direct,
  title={Direct solution of sets of linear equations whose matrix is sparse, symmetric and indefinite},
  author={Duff, Iain S and Reid, John K and Munksgaard, N and Nielsen, Hans B},
  journal={IMA Journal of Applied Mathematics},
  volume={23},
  number={2},
  pages={235--250},
  year={1979},
  publisher={Oxford University Press}
}

@article{karim2022efficient,
  title={Efficient preconditioners for interior point methods via a new {Schur}  complement-based strategy},
  author={Karim, Samah and Solomonik, Edgar},
  journal={SIAM Journal on Matrix Analysis and Applications},
  volume={43},
  number={4},
  pages={1680--1711},
  year={2022},
  publisher={SIAM}
}

@book{nocedal1999numerical,
  title={Numerical optimization},
  author={Nocedal, Jorge and Wright, Stephen J},
  year={1999},
  publisher={Springer}
}

@book{heath2018scientific,
  title={Scientific computing: an introductory survey, revised second edition},
  author={Heath, Michael T},
  year={2018},
  publisher={SIAM}
}

@article{kishore2017literature,
  title={Literature survey on low rank approximation of matrices},
  author={Kishore Kumar, N and Schneider, Jan},
  journal={Linear and Multilinear Algebra},
  volume={65},
  number={11},
  pages={2212--2244},
  year={2017},
  publisher={Taylor \& Francis}
}

@article{Tiskin2007179,
title = "Communication-efficient parallel generic pairwise elimination",
journal = "Future Generation Computer Systems",
volume = "23",
number = "2",
pages = "179 - 188",
year = "2007",
note = "",
issn = "0167-739X",
author = "Alexander Tiskin",
}

@ARTICLE{Sorensen_1676570, 
author={Sorensen, D.C.}, 
journal={Computers, IEEE Transactions on}, 
title={Analysis of Pairwise Pivoting in {G}aussian Elimination}, 
year={1985}, 
month={March}, 
volume={C-34}, 
number={3}, 
pages={274 -278}, 
ISSN={0018-9340},}

@article{Demmel:2010,
author = {Grigori, Laura and Demmel, James W. and Xiang, Hua},
 title = {Communication avoiding {G}aussian elimination},
 booktitle = {Proceedings of the 2008 ACM/IEEE Conference on Supercomputing},
 series = {SC '08},
 year = {2008},
 isbn = {978-1-4244-2835-9},
 location = {Austin, Texas},
 pages = {29:1--29:12},
 articleno = {29},
 numpages = {12},
 acmid = {1413400},
 publisher = {IEEE Press},
 address = {Piscataway, NJ, USA}
}

@article{dong2023simpler,
  title={Simpler is better: a comparative study of randomized pivoting algorithms for {CUR} and interpolative decompositions},
  author={Dong, Yijun and Martinsson, Per-Gunnar},
  journal={Advances in Computational Mathematics},
  volume={49},
  number={4},
  pages={66},
  year={2023},
  publisher={Springer}
}

@article{croz1992stability,
  title={Stability of methods for matrix inversion},
  author={Croz, Jeremy J Du and Higham, Nicholas J},
  journal={IMA Journal of Numerical Analysis},
  volume={12},
  number={1},
  pages={1--19},
  year={1992},
  publisher={Oxford University Press}
}

@phdthesis{jeannerod2006lsp,
  title={{LSP} matrix decomposition revisited},
  author={Jeannerod, Claude-Pierre},
  year={2006},
  school={Laboratoire de l'informatique du parall{\'e}lisme}
}

@InProceedings{furer2025fast,
  author =       {F\"{u}rer, Martin and Hoppen, Carlos and Trevisan, Vilmar},
  title =        {{Fast Gaussian Elimination for Low Treewidth Matrices}},
  booktitle =    {33rd Annual European Symposium on Algorithms (ESA 2025)},
  pages =        {116:1--116:15},
  series =       {Leibniz International Proceedings in Informatics (LIPIcs)},
  year =         {2025},
  volume =       {351},
  editor =       {Benoit, Anne and Kaplan, Haim and Wild, Sebastian and Herman, Grzegorz},
  publisher =    {Schloss Dagstuhl -- Leibniz-Zentrum f{\"u}r Informatik},
  address =      {Dagstuhl, Germany},
  doi =          {10.4230/LIPIcs.ESA.2025.116},
  url =          {https://drops.dagstuhl.de/entities/document/10.4230/LIPIcs.ESA.2025.116}
}

@article{ursic1982inverse,
  author = {Ursic, Silvio},
  title = {Inverse matrix representation with one triangular array},
  journal = {Linear Algebra and its Applications},
  volume = {47},
  pages = {151--157},
  year = {1982},
  doi = {10.1016/0024-3795(82)90232-4}
}

@article{chatterjee2014optimal,
  title={Optimal tree-decomposition balancing and reachability on low treewidth graphs},
  author={Chatterjee, Krishnendu and Ibsen-Jensen, Rasmus and Pavlogiannis, Andreas},
  year={2014}
}

\appendix
\section{Dense LU Factorization}
\label{app:lu}

\begin{algorithm}[h]
\caption{$[\B P,\B Q,\B L,\B U,r]=$ Fast-LU($\B A$)}\label{alg:lup}
\begin{algorithmic}[1]
\Require $\B A\in\mathbb{F}^{m\times n}$
\Ensure $\B P^T\B A\B Q  = \B L \B U $, $\B L\in\mathbb{F}^{m\times r}$  is lower-trapezoidal and unit-diagonal, $\B U\in\mathbb{F}^{r\times n}$ is upper-trapezoidal, $\B P$ and $\B Q$ are permutation matrices, $r$ is the rank of $\B A$. $(P^TL)^H$ is in row-echelon form.
\If {$m=1$}
 \If {$\B A=0$}
  \State Set $\B P=[1]$, $\B Q=I_n$, $\B L\in\mathbb{F}^{1\times 0}$, $\B U\in\mathbb{F}^{0\times n}$, and $r=0$.
 \Else
  \State Pick $\B Q$ to permute the leading nonzero of $\B A$ to the 1st column and set $\B P=[1]$, $\B L=[1]$, $\B U=\B A\B Q$, and $r=1$.
 \EndIf
\Else 
\State $\B A = \begin{bmatrix}\B A_1 \\ \B A_2\end{bmatrix}$, $\B A_1\in\mathbb{F}^{\lfloor m/2\rfloor \times n}$
\State $[\B P_1, \B Q_1, \B L_1, \B U_1, r_1] = \text{Fast-LU}(\B A_1)$
\State Compute $\B B_1 = (\B A_2\B Q_1)[:,1:r_1]\B U_1[:,1:r_1]^{-1}$
\State Compute $\B B_2 = (\B A_2\B Q_1)[:,r_1+1:] - \B B_1\B U_1[:,r_1+1:]$
\State $[\B P_2, \B Q_2, \B L_2, \B U_2, r_2] = \text{Fast-LU}(\B B_2)$
%\State Compute $\B C = \B L_2[:,1:r_2]^{\#}\B B[:,r_1+1:n]$
\State $\B P = \begin{bmatrix} \B P_1[:,1:r_1] & 0 & \B P_1[:,r_1+1:] \\ 0 & \B P_2 & 0\end{bmatrix}$
\label{li:alg:lup:p}
\State 
$\B Q = \B Q_1\begin{bmatrix}\B I & 0 \\ 0 & \B Q_2\end{bmatrix}$
\State $\B L = \begin{bmatrix}\B L_1[1:r_1,:] & 0  \\
\B P_2^T\B B_1 & \B L_2[:,1:r_2] \\
\B L_1[r_1+1:,:] & 0
\end{bmatrix}$
\State $\B U = \begin{bmatrix} \B U_1[:,1:r_1] &  \B U_1[:,r_1+1:]\B Q_2 \\
0 &\B U_2 \end{bmatrix}$
\State $r=r_1+r_2$
\EndIf
\end{algorithmic}
\end{algorithm}

Algorithm~\ref{alg:lup} provides a recursive fast-matrix-multiplication-based algorithm for rectangular dense LU factorization of the form $P^TAQ=LU$, where $P$ and $Q$ are permutation matrices.
The algorithm recurses on subsets of rows of $A$, hence it cannot be combined with row-wise partial pivoting or to perform complete pivoting, but $Q$ may be constructed based on partial pivoting.
%We state the algorithm for GF(2), but given recursion is column-wise only, performing partial pivoting (for pivoted LU over $\mathbb{R}$ or $\mathbb{C}$) would only change the $n=1$ (base-case) step in the recursion.

%\begin{algorithm}[t]
%\caption{$[\B P,\B L,\B U,R]=$ GF(2)-LU($\B A$)}\label{alg:lup}
%\begin{algorithmic}
%\Require $\B A\in\mathbb{F}_2^{m\times n}$ 
%\Ensure $\B A \B P = \B L \B U \bmod 2$, $\B L$ lower-triangular, $\B U$ is unit-diagonal and upper-trapezoidal, $R$ is the rank of $\B A$
%\If {$n=1$}
% \State Factorize $\B A\B P = \B L \B U \bmod 2$ directly, determine $R\in\{0,1\}$
%\Else 
%\State $\B A = [\B A_1, \B A_2]$, $\B A_1\in\mathbb{F}_2^{m\times \lfloor n/2\rfloor}$
%\State $[\B P_1, \B L_1, \B U_1, r_1] = \text{GF(2)-LU}(\B A_1)$
%\State Compute $\B B = \B L_1[:,1:r_1]^{\#}\B A_2$
%\State $[\B P_2, \B L_2, \B U_2, r_2] = \text{GF(2)-LU}(\B B[r_1+1:m,:])$
%%\State Compute $\B C = \B L_2[:,1:r_2]^{\#}\B B[:,r_1+1:n]$
%\State $\B P = \begin{bmatrix} \B P_1[:,1:r_1]& \B P_2[:,1:r_2]& \B P_1[:,r_1+1:]& \B P_2[:,r_2+1:]\end{bmatrix}$
%\State $\B L = \begin{bmatrix}\vert & 0 &0 \\
%\B L_1[:,1:R_1] &\B L_2 & 0 \\ \vert & \vert & \B I\end{bmatrix}$
%\State $\B U = \begin{bmatrix}\B U_1[1:R_1,1:R_1] & \B B[1:R_1,:] & \B U_1[1:R_1,R_1+1:] \\
%0 & \B U_2[:,1:R_1] & 0 \end{bmatrix}$
%\State $R=R_1+R_2$
%\EndIf
%\end{algorithmic}
%\end{algorithm}

\begin{theorem} (Theorem~\ref{thm:LU_dense_fast})
\label{thm:LU_dense_fast2}
\sloppy
Given $\B A \in \mathbb{F}^{m\times n}$, Algorithm~\ref{alg:lup} computes the factorization $\B P^T\B A\B Q = \B L \B U$ and identifies the rank $r$ of $\B A$ using
\[T_\text{LU}(m,n,r)=O(r^{\omega-2}mn)=O(\min(m,n)^{\omega-1}\max(m,n))\]
arithmetic operations.
Further, $(P^TL)^H$ is in row-echelon form. 
\end{theorem}
\begin{proof}
The correctness of the algorithm can be verified based on the block equations of the LU decomposition.
At each recursive step, the cost of the algorithm is dominated by inverting an $r_1\times r_1$ block of $\B U$ and computing matrix-matrix products to form $\B B_1$ and $\B B_2$.
%\todo{clarify pseudo inverse} Edgar: Its an inverse
%Given $\B L_1[:,1:r_1]^{\#}\in\mathbb{F}_2^{n\times O(\min(m,n))}$, 
These products may be decomposed into at most $O(mn/r_1^2)$ products of $r_1\times r_1$ blocks.
%matrix-matrix product can be computed with cost $O(R^{\omega-2}mn)$ by subdividing $\B A_2$ into blocks of size $r_1\times r_1$.
The inverse of a triangular matrix may be computed by recursively inverting on diagonal blocks, followed by matrix multiplication.
%Hence, we can form $\B L_1[:,1:r_1]^{\#}=\begin{bmatrix} \B L_1[1:r_1,1:r_1]^{\#} \\ \B L_1[r_1+1:,1:r_1]^{\#}\B L_1[1:r_1,1:r_1]^{\#}\end{bmatrix}$ by inverting the triangular $r_1\times r_1$ leading block with cost $O(R^{\omega})$, followed by a matrix product of cost $O(R^{\omega-1}m)$.
Overall, we have
\begin{align*}
T_\text{LU}(m,n,r)&\leq \max_{r_1,r_2,r_1+r_2=r}T_\text{LU}(m/2,n,r_1) + T_\text{LU}(m/2,n,r_2)+ O(r^{\omega-2}mn)\\
&=O(r^{\omega-2}mn).
\end{align*}
Further, $(P^TL)^H$ is in row-echelon form, since rows found to be linearly dependent are moved to the end at each recursive step, while pivots appear in strictly increasing column positions.
\end{proof}
For improved numerical stability, it may be advisable to augment Algorithm~\ref{alg:lup} to perform a more sophisticated row-wise pivoting strategy such as pairwise pivoting~\cite{Sorensen_1676570,Tiskin2007179}, tournament pivoting~\cite{Demmel:2010}, or a randomized algorithm~\cite{dong2023simpler}.

\end{document}